\newtheorem{thm}{Theorem}
\newtheorem{lem}{Lemma}
\newtheorem{defn}{Definition}
\newcommand{\expect}[1]{\mathbb{E}\left\{#1\right\}}
\newcommand{\defequiv}{\mbox{\raisebox{-.3ex}{$\overset{\vartriangle}{=}$}}}
\newcommand{\bv}[1]{{\boldsymbol{#1} }}
\newcommand{\script}[1]{{{\cal{#1} }}}
\newcommand{\feasible}{{\script{F}}}
\begin{document}

\title
  {Delay Analysis for Max Weight Opportunistic Scheduling in Wireless Systems}
\author{Michael J. Neely \\ $\vspace{-.4in}$% 
%\\ University of Southern California\\ http://www-rcf.usc.edu/$\sim$mjneely\\$\vspace{-.4in}$%
\thanks{Michael J. Neely is with the  Electrical Engineering department at the University
of Southern California, Los Angles, CA (web: http://www-rcf.usc.edu/$\sim$mjneely).} 
\thanks{This material was presented in part at the Allerton Conference on Communication, 
Control, and Computing, Monticello, IL, Sept. 2008 \cite{neely-maxweight-delay-allerton}.} 
\thanks{This material is supported in part  by one or more of 
the following: the DARPA IT-MANET program
grant W911NF-07-0028, the NSF grant OCE 0520324, 
the NSF Career grant CCF-0747525.}}

\markboth{}{Neely}

\maketitle

\begin{abstract} 
We consider the delay properties of max-weight opportunistic 
scheduling in a multi-user
ON/OFF wireless system, such as a multi-user downlink or uplink.  
It is well known that max-weight scheduling stabilizes the network (and hence
yields maximum throughput) whenever input rates are inside the network
capacity region.  We show that when arrival and channel processes are
independent, average delay of the max-weight policy is 
order-optimal, in the sense that it does not grow with the number
of network links.   While recent queue-grouping algorithms are known
to also yield order-optimal delay, this is the first such result for the simpler
class of max-weight policies.  We then consider multi-rate
transmission models and show that average delay in this case typically 
does increase with the network size due to queues containing a small 
number of  ``residual'' packets.
\end{abstract} 

\section{Introduction} 

\nocite{neely-maxweight-delay-allerton}

We consider the delay properties of max-weight opportunistic scheduling in
a multi-user wireless system.  Specifically, we consider a system with $N$ transmission
links. Each link
receives independent  data that arrives randomly and 
must be queued for eventual transmission.  Separate queues are maintained
by each link $i \in \{1, \ldots, N\}$, so that data arriving to queue $i$ must be transmitted
over link $i$.    The system works in slotted time with normalized
slots $t \in \{0, 1, 2, \ldots\}$.   The channel states of
each link vary randomly from slot to slot, and every slot $t$ the network controller observes the 
current queue backlogs and the current channel states, and selects a single link for
wireless transmission.

This is a classic \emph{opportunistic scheduling} scenario, where the network scheduler
can exploit knowledge of the current state of the time varying channels.   
It is well known that max-weight scheduling policies are throughput optimal in 
such systems, in the sense that they provably stabilize all queues whenever the 
input rate vector is inside the network capacity region.  This stability result 
was first shown by Tassiulas and Ephremides in \cite{tass-server-allocation}
for the special case of ON/OFF channels, and was later generalized to multi-rate 
transmission models and systems with power allocation \cite{kahale} \cite{andrews-downlink} 
\cite{neely-downlink-ton} \cite{now}.  However, the delay properties 
of max-weight scheduling are less understood.  An average delay bound that is linear
in $N$ is derived in \cite{neely-downlink-ton} \cite{now}.  
While this bound is tight in the case of 
correlated arrival and channel processes, it is widely believed to be loose 
for independent arrivals and channels.
 
In this paper, we focus on the special case of 
ON/OFF channels, and show that the max-weight policy 
indeed yields average delay that is $O(1)$ under independence
assumptions.  Thus, average delay does not grow with the network 
size and hence is \emph{order optimal}.   While our previous queue grouping 
results in \cite{neely-order-optimal-ton}
also demonstrate that $O(1)$ delay is possible, this is the first such result for the 
simpler class of max-weight policies. 
Specifically, we first show that for any input rate vector that is within a $\rho$-scaled
version of the capacity region (where $\rho$ represents the network loading 
and satisfies $0 < \rho < 1$), the max-weight rule yields average delay that
is less than or equal to  $\frac{c\log(1/(1-\rho))}{(1-\rho)^2}$, where $c$ is a constant that
does not depend on $\rho$ or $N$.\footnote{The value $c$ is used here to easily
express a delay scaling relationship, and represents 
a generic coefficient that does not depend on $\rho$ or $N$. The value $c$ is not 
necessarily the same in all places it is used.}   This is in comparison to the previous
delay bound of $\frac{cN}{1-\rho}$ developed for max-weight scheduling 
\cite{neely-downlink-ton} \cite{now}.  Note that our 
new bound does not grow with $N$, but
has a worse asymptotic in $\rho$.   We next present a different
analysis that  improves the delay bound to $\frac{c\log(1/(1-\rho))}{1-\rho}$ for systems
with ``$f$-balanced'' traffic rates (to be made precise in later sections).  That is, if arrival
rates are heterogeneous but are more balanced (so that the difference between the maximum
arrival rate and the average arrival rate is sufficiently small), then order-optimal average
delay is maintained while the delay asymptotic in $\rho$ is
improved.

Finally, we consider  systems with 
multi-rate capabilities.  We first provide a delay bound that grows linearly 
with $N$, similar to the bounds in \cite{neely-downlink-ton} \cite{now} but with an improved
coefficient.  We then provide an example multi-rate system and show 
that its average congestion and delay must grow at least linearly with $N$ under \emph{any} scheduling
algorithm, due to many queues having a small number of ``residual'' packets. This is an important
example and demonstrates that the $\Theta(N)$ behavior of the multi-rate delay bound is fundamental and 
cannot be avoided, highlighting a significant difference between single-rate and multi-rate 
systems.

It is known that order-optimal delay requires queue-based scheduling.  Indeed, it
is shown in \cite{neely-order-optimal-ton} that average delay in an $N$-user 
downlink with time varying channels grows at least linearly with $N$ 
if queue-independent algorithms
are used
(such as round-robin or randomized schedulers).  Related results are shown for 
$N\times N$ packet switches in \cite{neely-log-delay-ton}, where 
a delay gap between queue-aware and queue-independent algorithms is developed. 
Delay optimal control laws for multi-user
wireless systems are mostly limited to systems with special symmetric 
structure \cite{tass-server-allocation} \cite{edmund-thesis} \cite{anand-delay-it}.   
Delay optimality results  are developed in \cite{sanjay-heavy-traffic}  for a heavy traffic
regime in the limit as the system loading $\rho$ approaches $1$.   Recent results on 
exponents of the tail of delay distributions are provided in \cite{lin-exponent-allerton} \cite{stolyar-exponent-allerton}, and order-optimal delay for greedy maximal 
scheduling with $\rho$ a constant factor away
from $1$ is considered in \cite{fast-matching-sanjay} \cite{neely-maximal-bursty}.

The max-weight rule is also called the \emph{Longest Connected Queue} (LCQ) 
scheduling rule in the special case of an ON/OFF downlink.  This policy
was developed by Tassiulas and Ephremides in \cite{tass-server-allocation},
where it was shown to support  the full network capacity region and to also 
be  \emph{delay optimal} in the special symmetric case when all 
arrival rates and ON/OFF probabilities are the same for each link. 
%Related delay optimality results for symmetric systems are developed in []-[]. 
The fact that the actual average delay of LCQ in such symmetric cases is 
$O(1)$ was recently proven in \cite{anand-delay-it} (which shows that doubling
the size of a symmetric system does not increase the average delay) and
\cite{neely-order-optimal-ton} (which uses a queue-grouped Lyapunov function
to bound the average delay).  Delay properties of variations of 
LCQ for symmetric Poisson 
systems are considered in \cite{murat-onoff} in the limit of asymptotically large $N$. 
For asymmetric systems, it is shown in \cite{neely-order-optimal-ton} 
that a  \emph{different algorithm}, called the \emph{Largest Connected
Group} (LCG) algorithm, yields $O(1)$ average delay.  However,
the LCG algorithm requires some statistical knowledge to set up a queue-group
structure.  Hence, it is important  to understand the delay properties 
of the simpler 
max-weight rule, which does not require statistical knowledge. 
In this paper, we combine 
the \emph{queue grouping} concepts developed
in \cite{neely-order-optimal-ton} 
together with two novel Lyapunov functions to provide an order-optimal delay
analysis of max-weight.  The first Lyapunov function we use has a weighted 
sum of two different component functions, and  is inspired by
work in \cite{greedy-wu-srikant-j} where a  Lyapunov
function with a similar structure is used in a different context.

In the next section, we specify the network model and review basic concepts
concerning the network capacity region.  Section \ref{section:onoff1} proves
our first delay result for 
the ON/OFF channel model with general heterogeneous traffic rates inside the capacity region. 
Section \ref{section:onoff2} provides our second bound (with a tighter asymptotic
in $\rho$) for the case of heterogeneous traffic rates but under an  $f$-balanced
traffic assumption.  Section \ref{section:multi-rate} treats multi-rate systems. Section 
\ref{section:sim} provides simulation results.

\section{System Model} \label{section:model} 

Consider a multi-user wireless system with $N$ transmission links.  The system  operates
in slotted time with normalized slots $t \in \{0, 1, 2, \ldots\}$.  We assume that data 
is measured in units of fixed size packets, and let $A_i(t)$ represent the number of packets that arrive to link $i\in \{1, \ldots, N\}$ during
slot $t$.   Each link $i$ maintains
a separate queue to store this arriving data, and we let $Q_i(t)$ represent the number
of packets waiting for transmission over link $i$.

Let $S_i(t)$ represent the \emph{channel state} 
for the $i$th channel during slot $t$.  We assume that $S_i(t)$ is a non-negative integer
that represents the current transmission 
rate (in units of packets/slot) 
available over channel $i$ if this channel is selected for transmission on 
slot $t$.  For most of this paper, we consider the simple case of ON/OFF channels, where
$S_i(t) \in \{0, 1\}$ for all channels $i \in \{1, \ldots, N\}$ (multi-rate systems are treated in 
Section \ref{section:multi-rate}). 
 Define $\bv{S}(t) = (S_1(t), \ldots, S_N(t))$ as the channel
state vector. 

Let $\mu_i(t)$ represent the control decision variable on slot $t$, given as follows: 
\[ \mu_i(t) =  \left\{ \begin{array}{ll}
                          S_i(t)  , & \mbox{  if channel $i$ is selected on slot $t$} \\
                             0  ,  & \mbox{  otherwise} 
                            \end{array}
                                 \right. \]
Define  $\bv{\mu}(t) = (\mu_1(t), \ldots, \mu_N(t))$ as the vector of transmission decisions. 
We also call this the \emph{transmission rate vector}, as it determines the instantaneous
transmission rates over each link (in units of packets/slot), where the rate is either $0$ or $1$.
The constraint that at most one channel is selected per slot translates into the 
constraint that $\bv{\mu}(t)$ has at most one non-zero entry (and any non-zero entry $i$
is equal to $S_i(t)$).  Define  $\feasible(t)$ as the set of all such control vectors $\bv{\mu}(t)$ 
that are possible for slot $t$, called the \emph{feasibility set} for slot $t$. 
The queue dynamics for each queue $i\in \{1, \ldots, N\}$ 
are given as follows: 
\begin{equation} \label{eq:dynamics} 
 Q_i(t+1) = \max[Q_i(t) - \mu_i(t), 0] + A_i(t) 
\end{equation} 
subject to the constraint $\bv{\mu}(t) \in \feasible(t)$ for all $t$.

\subsection{Traffic and Channel Assumptions} 

We assume the arrival  processes $A_i(t)$ 
are independent for all $i \in \{1, \ldots, N\}$.  Further, 
each process $A_i(t)$ is  i.i.d. over slots with mean $\lambda_i = \expect{A_i(t)}$ and 
with a finite second moment $\expect{A_i(t)^2} < \infty$.  Similarly, we assume
channel processes $S_i(t)$ are independent of each other and i.i.d. over slots
with probabilities $Pr[S_i(t) = 1] = p_i$ for $i \in \{1, \ldots, N\}$.

\subsection{The Network Capacity Region} 

Suppose the network control policy chooses a transmission rate vector
every slot 
according to a well defined probability law, so that the queue states
evolve according to (\ref{eq:dynamics}).

\begin{defn}  A queue $Q_i(t)$ is \emph{strongly stable} if: 
\[ \limsup_{t\rightarrow\infty} \frac{1}{t} \sum_{\tau=0}^{t-1} \expect{Q_i(\tau)} < \infty \] 
\end{defn}
We say that the network of queues is \emph{strongly stable} if all individual queues
are strongly stable.  Throughout, we shall use the term ``stability'' to refer to strong stability.

Define $\Lambda$ as the \emph{network capacity region}, consisting of the closure of all
arrival rate vectors $\bv{\lambda} = (\lambda_1, \ldots, \lambda_N)$ for which there 
exists a stabilizing control algorithm.  In \cite{tass-server-allocation} it is shown that the 
capacity region $\Lambda$ is the set of all rate vectors
$\bv{\lambda} = (\lambda_1, \ldots, \lambda_N)$ such that for each 
of the $2^N-1$ non-empty link subsets $\script{L} \subset\{1, \ldots, N\}$, we have: 
\begin{equation} \label{eq:onoff-capacity} 
 \sum_{i \in \script{L}} \lambda_i \leq 1 - \Pi_{i \in \script{L}} (1-p_i) 
 \end{equation} 
This is an explicit description of the capacity region $\Lambda$.  The 
following alternative implicit characterization is also useful for analysis 
 (see \cite{now} and references therein): 
\begin{thm} \label{thm:capacity} (Capacity Region $\Lambda$)
The capacity region $\Lambda$ is equal to the set of all (non-negative) 
rate vectors
$\bv{\lambda} = (\lambda_1, \ldots, \lambda_N)$ for which there exists 
a stationary randomized control policy that observes the current channel state
vector $\bv{S}(t)$ and chooses a feasible transmission rate vector $\bv{\mu}(t) \in \feasible(t)$
as a random function of $\bv{S}(t)$, such that: 
\begin{equation} \label{eq:capacity-expectation} 
 \lambda_i =  \expect{\mu_i(t)}  \: \: \mbox{ for all $i \in \{1, \ldots, N\}$} 
 \end{equation}
 where the expectation is taken with respect to the random channel vector $\bv{S}(t)$
 and the  potentially random control action that depends on $\bv{S}(t)$.  $\Box$
\end{thm}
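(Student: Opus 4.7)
The plan is to prove the two inclusions separately. Let $\Lambda_{\text{rand}}$ denote the set of rate vectors $\bv{\lambda}$ for which a stationary randomized, $\bv{S}(t)$-only policy satisfying (\ref{eq:capacity-expectation}) exists. I would show $\Lambda_{\text{rand}} \subseteq \Lambda$ and $\Lambda \subseteq \Lambda_{\text{rand}}$.

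For the inclusion $\Lambda_{\text{rand}} \subseteq \Lambda$, fix $\bv{\lambda} \in \Lambda_{\text{rand}}$ and the stationary randomized policy that achieves it. Because this policy depends only on $\bv{S}(t)$, which is i.i.d.\ across slots, the resulting service vector $\bv{\mu}(t)$ is i.i.d.\ with $\expect{\mu_i(t)} = \lambda_i$. For any $\epsilon > 0$ I would construct a perturbed policy that idles with probability $\epsilon$ and otherwise applies the randomized rule; its expected service rates $(1-\epsilon)\lambda_i$ strictly dominate $\lambda_i - \epsilon'$ for a suitable $\epsilon'>0$. A standard Foster--Lyapunov calculation with $L(\bv{Q}) = \tfrac{1}{2}\sum_i Q_i^2$, applied to (\ref{eq:dynamics}), then produces a negative drift bound of the form $\Delta(\bv{Q}(t)) \leq B - \epsilon'\sum_i Q_i(t)$, yielding strong stability of the perturbed arrival vector $\bv{\lambda} - \epsilon'\bv{1}$. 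Letting $\epsilon \downarrow 0$ and invoking the closure used to define $\Lambda$ gives $\bv{\lambda} \in \Lambda$.

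For the reverse inclusion $\Lambda \subseteq \Lambda_{\text{rand}}$, fix $\bv{\lambda} \in \Lambda$ and approximate it by strictly stabilizable rate vectors $\bv{\lambda}^{(k)} \to \bv{\lambda}$. For each $k$ there is some (possibly arbitrary, queue-aware, history-dependent) scheduler under which all queues are strongly stable. From (\ref{eq:dynamics}) we have $Q_i(t) \geq Q_i(0) + \sum_{\tau=0}^{t-1}[A_i(\tau) - \mu_i(\tau)]$, so taking expectations, dividing by $t$ and using $\expect{Q_i(t)}/t \to 0$ (a consequence of strong stability) gives $\lim_{t\to\infty} \tfrac{1}{t}\sum_{\tau=0}^{t-1}\expect{\mu_i(\tau)} = \lambda_i^{(k)}$. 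I would then examine the time-averaged conditional distribution of $\bv{\mu}(t)$ given $\bv{S}(t) = \bv{s}$ over each of the finitely many channel states $\bv{s}$; since the feasibility set is finite for each $\bv{s}$, these empirical conditional distributions live in a compact space, and a subsequence converges to a limiting kernel $\pi(\cdot \mid \bv{s})$. This kernel defines a stationary randomized $\bv{S}$-only policy whose expected service rate is exactly $\bv{\lambda}^{(k)}$, so $\bv{\lambda}^{(k)} \in \Lambda_{\text{rand}}$. Finally, the set of $\bv{S}$-only randomized policies is itself compact in the same sense, so letting $k\to\infty$ yields a limiting policy achieving $\bv{\lambda}$.

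The main obstacle is the second direction, specifically the extraction of a stationary, channel-only randomized rule from an arbitrary (potentially queue-aware and temporally correlated) stabilizing algorithm. The Lyapunov argument of the first direction is routine; the subtlety here is measure-theoretic, requiring a Carath\'eodory / weak-compactness style argument to show that although the empirical joint distribution of $(\bv{S}(t),\bv{\mu}(t))$ along the original (non-stationary) algorithm need not converge, a subsequential limit exists whose $\bv{S}$-marginal matches the true channel distribution and whose conditional $\bv{\mu}$-distribution defines a legitimate stationary policy with the required service-rate expectations.
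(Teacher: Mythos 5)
The paper itself does not prove Theorem~\ref{thm:capacity}; it is quoted as a known result with a pointer to \cite{now} and references therein. Your proposal essentially reconstructs the standard argument from that literature: the easy inclusion via a quadratic Foster--Lyapunov drift giving strong stability whenever arrivals have strictly positive slack relative to the $\bv{S}$-only policy's service rates, and the converse by time-averaging the conditional law of $\bv{\mu}(t)$ given $\bv{S}(t)=\bv{s}$ under an arbitrary stabilizing algorithm and extracting a subsequential limit kernel (legitimate here since the channel-state space and each $\feasible(t)$ are finite, so the relevant distributions live in a compact finite-dimensional set). So the route is the same as the cited proof, and the outline is sound.

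Two points need tightening. First, in the converse you assert $\lim_{t\to\infty}\frac{1}{t}\sum_{\tau=0}^{t-1}\expect{\mu_i(\tau)}=\lambda_i$. This is not quite right: $\mu_i(t)$ is the \emph{offered} rate, which may be allocated to an empty queue, so from $Q_i(t)\geq Q_i(0)+\sum_{\tau=0}^{t-1}[A_i(\tau)-\mu_i(\tau)]$ and strong stability (which gives $\expect{Q_i(t_k)}/t_k\to 0$ only along a subsequence, not for every $t$) you obtain only $\liminf$ of the time-averaged offered rate $\geq\lambda_i$ along that subsequence. The constructed $\bv{S}$-only policy therefore satisfies $\expect{\mu_i(t)}\geq\lambda_i$, and you must then thin it (set $\mu_i(t)$ to zero with an appropriate probability $q_i$) to obtain the exact equality (\ref{eq:capacity-expectation}); this is precisely the downward-closure device the paper invokes immediately after the theorem statement. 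Second, in the first inclusion the perturbation that idles the policy with probability $\epsilon$ is unnecessary and points the wrong way: to exhibit stabilizable vectors approaching $\bv{\lambda}$ you should perturb the \emph{arrivals} (serve rates $\bv{\lambda}-\epsilon'\bv{1}$, truncated at zero, with the original unmodified policy), which already yields the slack needed for the drift bound; your version still works after choosing $\epsilon'>\epsilon\max_i\lambda_i$, but it is needlessly indirect. With these repairs the proof is complete and matches the cited argument.
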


It is easy to see that any non-negative rate vector that is entrywise less than or equal
to a vector $\bv{\lambda} \in \Lambda$ is also contained in $\Lambda$. This follows 
immediately from Theorem \ref{thm:capacity} by modifying the stationary randomized
policy $\bv{\mu}(t)$ that yields $\expect{\mu_i(t)} = \lambda_i$ to a new policy $\hat{\bv{\mu}}(t)$ 
by probabilistically setting each $\mu_i(t)$ value 
to zero with an appropriate probability $q_i$, yielding $\expect{\hat{\mu}_i(t)} 
= \expect{\mu_i(t)} (1-q_i) \leq \expect{\mu_i(t)}$.  

It is also easy to show that the capacity 
region $\Lambda$ is convex and compact (i.e., closed and bounded).  Further, 
if $\expect{S_i(t)} > 0$ for all $i \in \{1, \ldots, N\}$,  
then $\Lambda$ has full dimension of size $N$
and hence has a non-empty interior.

\subsection{The Max-Weight Scheduling Policy} 

Given a rate vector  $\bv{\lambda}$ interior to the capacity region $\Lambda$, a stationary, 
randomized,  queue-independent policy could in principle be designed to stabilize the system, although this 
would require full knowledge of the traffic rates and channel state probabilities.  
However, it is well known that the following queue-aware
\emph{max-weight} policy stabilizes the system whenever the rate vector is interior to 
$\Lambda$, without requiring knowledge of the traffic rates or channel statistics 
\cite{tass-server-allocation}:
Each slot $t$, observe current queue backlogs and channel states $Q_i(t)$ and $S_i(t)$
for each link $i$, and choose to serve the link  $i^*(t)  \in \{1, \ldots, N\}$ 
with the largest $Q_i(t)S_i(t)$ product.  This is also called the \emph{Longest Connected
Queue} policy (LCQ) \cite{tass-server-allocation}, as it serves the queue with the 
largest backlog among all that are currently ON.

The max-weight policy is very important because of its simplicity and its general
stability properties.   However, a tight delay analysis is quite challenging, 
and prior work provides only a loose upper bound on average delay that is $O(N)$, i.e., 
linear in the network size \cite{neely-downlink-ton} \cite{now}. It 
is shown in \cite{neely-order-optimal-ton} that $O(1)$ average delay is possible 
when  both channels and 
packet arrivals are independent across users and across timeslots, and when no
traffic rate is larger than the average traffic rate by more than a specified amount.  
The $O(1)$ 
delay analysis of  \cite{neely-order-optimal-ton} 
uses an algorithm called \emph{Largest Connected
Group} that is different from the max-weight policy and that requires
more statistical knowledge to implement.  In 
the following, we use the queue grouping analysis techniques
of \cite{neely-order-optimal-ton} to show that the simpler  
max-weight policy can \emph{also} provide $O(1)$ average delay, and does so 
for all traffic rates within a $\rho$-scaled version of the capacity region.  However, 
the scaling in $\rho$ is worse than that in \cite{neely-order-optimal-ton}.  Section
\ref{section:onoff2} recovers the same $\rho$ scaling as \cite{neely-order-optimal-ton}
under a similar ``$f$-balanced'' traffic  assumption.  

\section{Delay Analysis for Arbitrary Rates in $\Lambda$} \label{section:onoff1}

Consider the ON/OFF channel model where each 
$S_i(t)$ is an independent i.i.d. Bernoulli process with $Pr[S_i(t) = 1] = p_i$.
Assume the arrival rate vector $\bv{\lambda} = (\lambda_1, \ldots, \lambda_N)$
is interior to the capacity region $\Lambda$, so that there exists a value $\rho$ 
such that $0 < \rho < 1$ and: 
\begin{equation} 
\bv{\lambda} \in \rho\Lambda \label{eq:rho-lambda} 
\end{equation}
That is, $\bv{\lambda}$ is contained within a $\rho$-scaled version of the capacity region.
The parameter $\rho$ can be viewed as the \emph{network loading}, measuring the fraction 
the rate vector $\bv{\lambda}$ is away from the capacity region boundary.  Define $A_{tot}(t)$ 
as the total packet arrivals on slot $t$: 
\[ A_{tot}(t) \defequiv \sum_{i=1}^N A_i(t) \]
Define $\lambda_{tot}  = \sum_{i=1}^N \lambda_i$ as the sum packet arrival rate. Because
the sum of the entries of any rate vector in the capacity region $\Lambda$ is no 
more than $1$, we have by (\ref{eq:rho-lambda}) 
that $\lambda_{tot} \leq \rho$.

\subsection{Important Parameters of $\Lambda$}

To analyze delay, it is useful to characterize the $N$-dimensional 
capacity region $\Lambda$ in terms
of its size on subspaces of smaller dimension.
To this end, define $p_{min}$ as the smallest channel ON probability: 
\[ p_{min} \defequiv \min_{i\in\{1, \ldots, N\}} p_i \]
We assume that $0  < p_{min} < 1$. 
For each positive  integer $K$, define
parameters $\mu_K^{sym}$ and $r_K$ as follows: 
\begin{eqnarray*} 
\mu_K^{sym} &\defequiv& \frac{1}{K}[1-(1-p_{min})^K] \\
r_K &\defequiv& 1 - (1-p_{min})^K
\end{eqnarray*}
Thus, $r_K = K\mu_K^{sym}$. The following lemma shall be useful. 
\begin{lem} \label{lem:mu-sym}  For any positive integer $K$ and any probability $p_{min}>0$, 
we have $\mu_{K}^{sym} > \mu_{K+1}^{sym}$. 
That is: 
\[ \frac{1}{K}[1 - (1-p_{min})^K] > \frac{1}{K+1}[1 - (1-p_{min})^{K+1}] \]
\end{lem}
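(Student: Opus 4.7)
The plan is to reduce the inequality to a transparent "mean of a decreasing sequence" statement. Setting $q = 1 - p_{min} \in (0,1)$ and using the standard identity $1 - q^K = (1-q)\sum_{j=0}^{K-1} q^j$, I rewrite
\[ \mu_K^{sym} = \frac{1 - q^K}{K} = (1-q)\cdot \frac{1}{K}\sum_{j=0}^{K-1} q^j. \]
So the claim $\mu_K^{sym} > \mu_{K+1}^{sym}$ is equivalent, after dividing by the positive constant $(1-q)$, to saying that the arithmetic mean of $\{1, q, q^2, \ldots, q^{K-1}\}$ is strictly larger than the arithmetic mean of $\{1, q, q^2, \ldots, q^{K-1}, q^K\}$.

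Then I would verify this comparison of means directly. Denoting $\bar{q}_K = \frac{1}{K}\sum_{j=0}^{K-1} q^j$, the recursion $\bar{q}_{K+1} = \frac{K\bar{q}_K + q^K}{K+1}$ shows that $\bar{q}_{K+1} < \bar{q}_K$ if and only if $q^K < \bar{q}_K$. But each of the $K$ terms being averaged in $\bar{q}_K$ is of the form $q^j$ with $j \in \{0, 1, \ldots, K-1\}$, and since $0 < q < 1$ we have $q^j > q^K$ for every such $j$. Hence $\bar{q}_K > q^K$ strictly, which yields $\bar{q}_{K+1} < \bar{q}_K$ and therefore $\mu_{K+1}^{sym} < \mu_K^{sym}$.

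There is no real obstacle in this lemma — it is purely algebraic, and the assumption $p_{min} > 0$ (so that $q < 1$ strictly) is exactly what is needed to make the geometric sequence strictly decreasing and thus make all the inequalities above strict. The only small care I would take is to keep the constant factor $(1-q) = p_{min}$ positive throughout so that dividing by it preserves the direction of the inequality, and to note that the argument is valid for every positive integer $K$, including the base case $K = 1$ (where $\bar{q}_1 = 1 > q = q^1$ trivially).
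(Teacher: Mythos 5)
Your proof is correct, and it takes a different route from the paper's. The paper first disposes of $p_{min}=1$ trivially, then for $0<p_{min}<1$ multiplies the claimed inequality by $K(K+1)$ and rearranges it into the equivalent form $(1-p_{min})^K + Kp_{min}(1-p_{min})^K < 1$, which it verifies by bounding the left side by the first two terms of the binomial expansion of $((1-p_{min})+p_{min})^K$. You instead factor $1-q^K=(1-q)\sum_{j=0}^{K-1}q^j$ with $q=1-p_{min}$ and reduce the lemma to the statement that the running arithmetic mean $\bar q_K$ of the strictly decreasing sequence $1,q,q^2,\dots$ is strictly decreasing, which follows from the recursion $\bar q_{K+1}=\frac{K\bar q_K+q^K}{K+1}$ and $q^K<\bar q_K$. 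Your argument is arguably more transparent (it explains \emph{why} the quantity decreases, as a Ces\`aro mean of a decreasing sequence) and avoids the binomial-theorem estimate; the paper's is a shorter brute-force rearrangement. One small caveat: the lemma as stated allows $p_{min}=1$, i.e.\ $q=0$, which your opening line ``$q=1-p_{min}\in(0,1)$'' and the step ``$q^j>q^K$ for every such $j$'' formally exclude (at $q=0$ the terms $q^j$ with $j\ge 1$ equal $q^K$ rather than exceed it). The conclusion still holds there since the $j=0$ term gives $\bar q_K=1/K>0=q^K$, or one can simply note, as the paper does, that the case $p_{min}=1$ reads $1/K>1/(K+1)$ and is trivial; add that one sentence and the proof covers the full statement.
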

\begin{proof}
See Appendix D. 
\end{proof} 

 Further, for $K \in \{1, \ldots, N\}$, 
let  $\script{L}_K$ represent a particular subset of $K$ links within the link set $\{1, \ldots, N\}$.
For each subset $\script{L}_K$, define $\bv{1}_{\script{L}_K}$ as an $N$-dimensional 
vector that is $1$ in all entries $i \in \script{L}_K$, and zero in all other entries.  

\begin{lem} \label{lem:symmetry-fact} For each set $\script{L}_K$ of size $K$ (for any integer 
$K$ such that $1 \leq K \leq N$) we have: 
\[ \mu_{K}^{sym}\bv{1}_{\script{L}_K} \in \Lambda  \]
Furthermore, for each integer $k$ such that $1 \leq k \leq K$ and for any set $\script{L}_k$
that contains $k$ links,  we have 
$\mu_{K}^{sym} \bv{1}_{\script{L}_k} \in \Lambda$. 
\end{lem}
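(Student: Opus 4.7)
The plan is to verify the first claim by directly checking the explicit capacity region inequalities in (\ref{eq:onoff-capacity}), and then to obtain the second claim as a corollary using the monotonicity property of $\Lambda$ stated just after Theorem \ref{thm:capacity}.

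First I would fix an arbitrary non-empty link subset $\script{L} \subset \{1,\ldots,N\}$ and set $m = |\script{L} \cap \script{L}_K|$, so that the left-hand side of (\ref{eq:onoff-capacity}) for the candidate rate vector $\mu_K^{sym} \bv{1}_{\script{L}_K}$ equals $m\, \mu_K^{sym}$. If $m=0$ the inequality is trivial, so assume $m \geq 1$. Using $p_i \geq p_{min}$ for every $i$, I would bound
\[
\prod_{i \in \script{L}}(1-p_i) \;\leq\; \prod_{i \in \script{L}\cap\script{L}_K}(1-p_i) \;\leq\; (1-p_{min})^m,
\]
so the right-hand side of (\ref{eq:onoff-capacity}) is at least $1-(1-p_{min})^m = m\, \mu_m^{sym}$. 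Since $1 \leq m \leq K$, Lemma \ref{lem:mu-sym} (applied inductively) gives $\mu_m^{sym} \geq \mu_K^{sym}$, which yields $1 - \prod_{i \in \script{L}}(1-p_i) \geq m\, \mu_K^{sym}$ and establishes (\ref{eq:onoff-capacity}) for this $\script{L}$. Since $\script{L}$ was arbitrary, we conclude $\mu_K^{sym} \bv{1}_{\script{L}_K} \in \Lambda$.

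For the second claim, fix any $k$ with $1 \leq k \leq K$ and any $k$-link set $\script{L}_k$. I would extend $\script{L}_k$ to some $K$-link set $\script{L}_K' \supset \script{L}_k$ (any $K-k$ additional links will do). The first part of the lemma then gives $\mu_K^{sym} \bv{1}_{\script{L}_K'} \in \Lambda$, and since $\mu_K^{sym} \bv{1}_{\script{L}_k} \leq \mu_K^{sym} \bv{1}_{\script{L}_K'}$ entrywise, the monotonicity of $\Lambda$ (observed immediately after Theorem \ref{thm:capacity}) delivers $\mu_K^{sym} \bv{1}_{\script{L}_k} \in \Lambda$.

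The only nontrivial step is the inequality chain $m\, \mu_K^{sym} \leq m\, \mu_m^{sym} \leq 1-(1-p_{min})^m$, whose first link relies on the monotonicity direction provided by Lemma \ref{lem:mu-sym}; the rest is a routine bookkeeping combination of the product bound and the definition of $\mu_K^{sym}$.
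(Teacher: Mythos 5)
Your proof is correct and follows essentially the same route as the paper: checking the explicit inequalities (\ref{eq:onoff-capacity}) via the bound $\prod_{i\in\script{L}}(1-p_i)\leq(1-p_{min})^m$ and the monotonicity $\mu_K^{sym}\leq\mu_m^{sym}$ from Lemma \ref{lem:mu-sym}, then invoking the entrywise monotonicity of $\Lambda$ for the second claim. Your explicit extension of $\script{L}_k$ to a $K$-link superset merely spells out a step the paper leaves implicit.
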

\begin{proof}
We first prove that $\mu_K^{sym} \bv{1}_{\script{L}_K} \in \Lambda$. 
By (\ref{eq:onoff-capacity}), it suffices to show that for any integer $m$ such that $1 \leq m \leq K$,  
the sum of any $m$ non-zero 
components of $\mu_{K}^{sym}\bv{1}_{\script{L}_K}$ is less than or equal to $r_m$.\footnote{Note
that $r_m \leq 1 - \Pi_{i \in \script{L}_m}(1-p_i)$, where $\script{L}_m$ is any subset
of $m$ links.}
That is, it suffices to show that $m\mu_K^{sym} \leq r_m$. 
But this is equivalent to showing that $\mu_K^{sym} \leq \mu_m^{sym}$ for 
$m \leq K$, which is true by Lemma \ref{lem:mu-sym}.   Finally, the
fact that $\mu_K^{sym}\bv{1}_{\script{L}_k}\in\Lambda$ (for any integer 
$k$ such that $1 \leq k \leq K$)
follows because any 
rate vector with entries less than or equal to another rate vector in $\Lambda$ is also in $\Lambda$.
\end{proof}

Thus, $\mu_K^{sym}$ can be intuitively viewed as an edge size such that any 
$K$-dimensional hypercube of this edge size (with dimensions defined along the orthogonal 
directions of any $K$ axes of
$\mathbb{R}^N$) can fit inside the capacity region 
$\Lambda$.

\subsection{The $O(1)$ Delay Bound for Arbitrary Traffic in $\Lambda$} 

Suppose the LCQ algorithm is used together with a stationary probabilistic tie breaking
rule in cases when multiple queues have the same weight. 
 This allows the queueing system to be  viewed as a 
stationary Markov chain. In this case, it is well known that if the arrival rate vector is 
interior to the capacity region, then all queues are stable under LCQ, with a well defined steady
state time average \cite{now}.   The following $O(N)$ delay bound  for LCQ 
is given in \cite{neely-order-optimal-ton}:\footnote{The 
bound in \cite{neely-order-optimal-ton} is of the form $c/\epsilon$, where 
$\epsilon$ is any value such that $\bv{\lambda} + \bv{\epsilon} \in \Lambda$, where
$\bv{\epsilon}$ is a vector with all values equal to $\epsilon$.  The bound (\ref{eq:old-onoff})
follows by observing that $\epsilon = (1-\rho)r_N/N$ satisfies $\bv{\lambda} + \bv{\epsilon} \in \Lambda$
whenever $\bv{\lambda} \in \rho \Lambda$. A similar $c/\epsilon$ bound is given in 
\cite{neely-downlink-ton} \cite{now} for more general multi-rate systems.}

\begin{equation} \label{eq:old-onoff} 
\overline{W} \leq \frac{N[1 + \frac{1}{\lambda_{tot}}\sum_{i=1}^N\expect{A_i(t)^2} - \frac{2}{\lambda_{tot}}\sum_{i=1}^N \lambda_i^2]}{2r_N(1-\rho)} 
\end{equation}
where $\overline{W}$ represents the average delay in the system. 
The bound (\ref{eq:old-onoff}) also holds for arrival vectors  $\bv{A}(t)$ that are i.i.d. over slots but with 
possibly correlated entries $A_i(t)$ on the same slot $t$.
The next theorem demonstrates an improved $O(1)$ bound in the case when all arrival
processes $A_i(t)$ are independent.

\begin{thm} \label{thm:onoff1}  (Delay Bound for LCQ) Consider the ON/OFF channel model and  
assume processes $A_i(t)$ and $S_i(t)$ are independent and
i.i.d. over slots. Assume that $\bv{\lambda} \in \rho\Lambda$ 
for some network loading $\rho$ such that $0 < \rho < 1$. 
 Let $K$ be any integer such that $r_{K+1} > \lambda_{tot}$, that is:\footnote{Note that  
 $\lambda_{tot} \leq \rho < 1$, and hence there is 
 always a suitably large value $K$ such that (\ref{eq:k-condition}) holds.}  
\begin{equation} \label{eq:k-condition} 
 1 - (1-p_{min})^{K+1}  > \lambda_{tot} 
 \end{equation} 
Then the max-weight (LCQ) 
policy for the ON/OFF channel model  stabilizes all queues and yields: 
\begin{equation*}  
\sum_{i=1}^N \overline{Q}_i \leq \frac{KB_{\theta}C}{(1-\rho)^2}
\end{equation*} 
where $\overline{Q}_i$ is the time average number of packets in queue $i$, and where the 
constants $B_{\theta}$,  $C$, and $\theta$ are defined: 
\begin{eqnarray} 
B_{\theta} &\defequiv& \frac{\lambda_{tot}}{2} + \frac{1}{2}\sum_{i=1}^N\expect{A_i(t)^2} - \sum_{i=1}^N\lambda_i^2 \nonumber \\
&& + \frac{\theta}{2} \left[\expect{A_{tot}(t)^2} + \lambda_{tot} - 2\lambda_{tot}^2  \right] \label{eq:B} \\
C &\defequiv&  \left\{ \begin{array}{ll}
                           \frac{r_{K+1}}{\frac{r_NK\lambda_{tot}}{N(1-\rho)}
+ \frac{r_K(r_{K+1} - \lambda_{tot})}{(1-\rho)}} &\mbox{ if $K < N$} \\
                             1/r_N  & \mbox{ if $K \geq N$} 
                            \end{array}
                                 \right. \\
\theta &\defequiv&   \left\{ \begin{array}{ll}
                        \frac{(1-\rho)(\mu_K^{sym} - \mu_N^{sym})}{r_{K+1}}   &\mbox{ if $K < N$} \\
                             0  & \mbox{ if $K \geq N$} 
                            \end{array}
                                 \right. \\
\end{eqnarray} 
 By Little's Theorem, average
delay $\overline{W}$ thus satisfies: 
\begin{equation} \label{eq:lcq1} 
\overline{W} \leq \min\left[\frac{KB_{\theta}C}{\lambda_{tot}(1-\rho)^2} , \frac{NB_0}{\lambda_{tot}r_N(1-\rho)}  \right]
\end{equation} 
where $B_0$ represents the value of $B_{\theta}$ with $\theta = 0$, and the second expression in the 
above $\min[\cdot, \cdot]$ function is identical to the previous delay bound (\ref{eq:old-onoff}). 
\end{thm}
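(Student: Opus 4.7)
The plan is a Lyapunov-drift argument using the two-component quadratic function
\[
L(\bv{Q}(t)) = \tfrac{1}{2}\sum_{i=1}^N Q_i(t)^2 + \tfrac{\theta}{2} U(t)^2, \qquad U(t) \defequiv \sum_{i=1}^N Q_i(t),
\]
where $\theta\geq 0$ is the weight stated in the theorem. The first summand is the classical max-weight Lyapunov function (sensitive to per-queue behavior) and the second tracks the aggregate workload; the role of $\theta$ is to balance the per-queue ``group'' service rate $\mu_K^{sym}$ that LCQ attains on the top-$K$ ranked backlog set (afforded by Lemma \ref{lem:symmetry-fact}) against the aggregate rate $r_N$ it attains overall.

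First I would expand $\Delta(\bv{Q}) = \expect{L(\bv{Q}(t+1)) - L(\bv{Q}(t)) \mid \bv{Q}(t)}$ using (\ref{eq:dynamics}), the ON/OFF identity $\mu_i(t)^2 = \mu_i(t)$, and the independence of $\bv{A}(t)$ from $\{\bv{Q}(t),\bv{S}(t)\}$, obtaining
\[
\Delta(\bv{Q}) \leq B_\theta + \sum_i Q_i(\lambda_i - \expect{\mu_i^{LCQ}\mid\bv{Q}}) + \theta U(\lambda_{tot} - \expect{\mu_{tot}^{LCQ}\mid\bv{Q}}),
\]
in which $B_\theta$ is exactly the constant in (\ref{eq:B}): the $\lambda_{tot}/2$ term arises from $\expect{\sum_i\mu_i^2} = \expect{\mu_{tot}} \to \lambda_{tot}$ under stability, the $-\sum_i\lambda_i^2$ term from the cross contribution $-2\lambda_i\expect{\mu_i}$ generated by arrival/service independence, and the $\theta$-piece from the analogous expansion of $U(t+1)^2$.

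Second, with a tie-breaking rule so that LCQ always serves whenever any queue is ON, LCQ maximizes $\sum_i Q_i\mu_i + \theta U\mu_{tot}$ over $\bv{\mu}\in\feasible(t)$ on every slot. Since $\bv{\lambda}\in\rho\Lambda$, for every $\bv{\nu}\in\Lambda$ the vector $\bv{\lambda}+(1-\rho)\bv{\nu}$ also lies in $\Lambda$ by convexity, and Theorem \ref{thm:capacity} yields a stationary randomized policy achieving these rates. Comparison with this alternative gives
\[
\sum_i Q_i(\lambda_i - \expect{\mu_i^{LCQ}\mid\bv{Q}}) + \theta U(\lambda_{tot} - \expect{\mu_{tot}^{LCQ}\mid\bv{Q}}) \leq -(1-\rho)\Big(\sum_i Q_i\nu_i + \theta U\nu_{tot}\Big).
\]
I then apply the queue-grouping idea of \cite{neely-order-optimal-ton}: rank queues so $Q_{(1)}(t)\geq\cdots\geq Q_{(N)}(t)$, let $\script{L}_K(t)$ be the dynamic top-$K$ group, and take $\bv{\nu}$ to be a convex combination (weight $\alpha$) of $\mu_K^{sym}\bv{1}_{\script{L}_K(t)}$ and $\mu_N^{sym}\bv{1}_{\{1,\ldots,N\}}$, both of which lie in $\Lambda$ by Lemma \ref{lem:symmetry-fact}. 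With $T_K = \sum_{k=1}^K Q_{(k)}$ this produces
\[
\Delta(\bv{Q}) \leq B_\theta - (1-\rho)\Big[\alpha\mu_K^{sym} T_K + (1-\alpha)\mu_N^{sym} U + \theta U(\alpha r_K + (1-\alpha)r_N)\Big].
\]

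It remains to tune $\alpha$ and $\theta$ so that the bracketed expression is at least $(1-\rho)U/(KC)$ for the constant $C$ stated in the theorem. Using the order-statistic inequality $T_K\geq (K/N)U$ and the hypothesis (\ref{eq:k-condition}) that $r_{K+1}>\lambda_{tot}$, this should give $\Delta(\bv{Q}) \leq B_\theta - (1-\rho)^2 U/(KC)$; taking expectations, summing over $t$, and using $L\geq 0$ then delivers $\overline{U} = \sum_i \overline{Q}_i \leq KB_\theta C/(1-\rho)^2$, and Little's theorem combined with the previous bound (\ref{eq:old-onoff}) produces the min-form in (\ref{eq:lcq1}). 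The case $K\geq N$ collapses to the classical single-component analysis with $\theta=0$ and $C=1/r_N$. The main obstacle is precisely this last joint tuning: the inequality $T_K\geq (K/N)U$ is loose in ``skewed'' backlog configurations, so the $\theta U\nu_{tot}$ piece must pick up the slack in ``balanced'' configurations where the $\mu_K^{sym}T_K$ piece is weak, and the strictly positive gap $r_{K+1}-\lambda_{tot}$ provided by (\ref{eq:k-condition}) is exactly what prevents the two regimes from cancelling one another while also appearing explicitly in the denominator of $C$ and in the formula $\theta=(1-\rho)(\mu_K^{sym}-\mu_N^{sym})/r_{K+1}$. Carrying out this bookkeeping so the coefficients match the theorem statement exactly is where I expect the bulk of the proof effort to lie.
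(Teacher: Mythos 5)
There is a genuine gap, and it sits exactly where you locate ``the bulk of the proof effort.'' Your plan compares \emph{both} terms of the drift against a single stationary randomized policy, i.e.\ you assert
$\theta U\bigl(\lambda_{tot} - \expect{\mu_{tot}^{LCQ}\mid\bv{Q}}\bigr) \leq -(1-\rho)\,\theta U \nu_{tot}$,
which requires the \emph{actual} aggregate departure rate of LCQ to be at least $\lambda_{tot} + (1-\rho)\nu_{tot}$ uniformly in $\bv{Q}$. The drift of $U(t)^2$ involves the true departures $\tilde{\mu}_{tot}(t)\in\{0,1\}$, not the nominal rates: while LCQ (work-conserving) does maximize $\tilde{\mu}_{tot}$ pointwise over $\feasible(t)$, the randomized comparison policy achieves $\expect{\mu_i^*} = \lambda_i + (1-\rho)\nu_i$ only as nominal rates, and its \emph{actual} departures collapse when queues are empty. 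Concretely, if a single queue $i$ is non-empty, every policy's departure rate is at most $p_i$, which can be far below $\lambda_{tot}$, so the inequality you need is simply false in low-occupancy states. The paper avoids this by never comparing the $\theta$-term against the randomized policy: it splits on $L(t)$, the number of non-empty queues. When $L(t)\leq K$ it bounds the $\theta$-term crudely by $+\theta Q_{tot}\lambda_{tot}$ (a positive contribution) and gets all the negative drift from the first term, using the comparison vector $\bv{\lambda}+(1-\rho)\mu_K^{sym}\bv{1}_{\script{L}(t)}$ supported on the \emph{entire} set of non-empty queues (so the drift covers $Q_{tot}$ with coefficient $(1-\rho)\mu_K^{sym}$, with no order-statistic slack). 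When $L(t)>K$ it uses the direct probabilistic fact $\expect{\tilde{\mu}_{tot}\mid\bv{Q}}\geq r_{K+1}$ (at least one of the $K{+}1$ worst non-empty channels is ON), a property of work conservation and the channel statistics, not of any max-weight comparison; the hypothesis $r_{K+1}>\lambda_{tot}$ is what makes the $\theta$-term negative in this regime. Choosing $\theta$ to equalize the drift coefficients of the two cases is what produces $\theta = (1-\rho)(\mu_K^{sym}-\mu_N^{sym})/r_{K+1}$ and the stated $C$.

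A secondary problem: even setting aside the aggregate term, your single fixed-$\alpha$ convex combination with the top-$K$ ranked set plus the bound $T_K\geq (K/N)U$ yields a per-unit-backlog drift of order $\alpha r_K/N + (1-\alpha)r_N/N$, i.e.\ it only reproduces the old $O(N)$ bound; the $O(1)$ scaling cannot be extracted from the first Lyapunov component alone in the many-nonempty-queues regime, which is precisely why the paper lets the $\theta$-component carry that regime via $r_{K+1}$. So the missing ingredients are (i) the case analysis on $L(t)$ with the comparison vector tied to the set of non-empty queues rather than a ranked top-$K$ set, and (ii) the bound $\expect{\tilde{\mu}_{tot}\mid\bv{Q}}\geq r_{K+1}$ for $L(t)>K$ applied to LCQ's own departures, neither of which is recovered by your joint max-weight comparison.
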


The proof of Theorem \ref{thm:onoff1} is given in the next subsection.  
We note that the right term inside the  $\min[\cdot, \cdot]$ operator in (\ref{eq:lcq1}) 
 is smaller in the case $K \geq N$.  The above bound can be minimized over all positive integers $K$ that 
satisfy $r_{K+1} >  \lambda_{tot}$. 
For a simpler interpretation of the bound that illuminates the fact that this is an $O(1)$ delay
result, note that because $(1+\rho)/2 > \rho \geq \lambda_{tot}$, 
we can ensure that (\ref{eq:k-condition}) holds
by choosing $K$ to satisfy: 
\[ 1 - (1-p_{min})^{K+1} \geq (1+\rho)/2 \]
Choosing $K$ as follows accomplishes this:
\begin{equation} \label{eq:K-example} 
 K  = \max\left[1, \left\lceil  \frac{\log(2/(1-\rho))}{\log(1/(1-p_{min}))}   \right\rceil - 1\right] 
 \end{equation} 
Because $\lambda_{tot} \leq \rho$, it 
is not difficult to show that with this choice of $K$, we have $r_{K+1} - \lambda_{tot} \geq 
(1-\rho)/2$.  Thus, in the case $K < N$ we have: 
\[ C \leq \frac{r_{K+1}}{\frac{r_K(r_{K+1}- \lambda_{tot})}{(1-\rho)}} \leq \frac{2r_{K+1}}{r_{K}} \]
Because $C = 1/r_N$ for the case $K \geq N$, we have that $C = O(1)$ (regardless of whether or 
not $K<N$). 
Further, we have from (\ref{eq:K-example}) that 
$K$ is proportional to $\log(1/(1-\rho))$ but independent of $N$.  Finally, if arrival processes are 
independent so that $\expect{A_{tot}(t)^2} = O(1)$, we have $B_{\theta}/\lambda_{tot} = O(1)$. Therefore, the 
delay bound of (\ref{eq:lcq1}) has the form: 
\begin{equation} 
 \overline{W} \leq \min\left[ \frac{c_1 \log(1/(1-\rho))}{(1-\rho)^2}, \frac{c_2 N}{(1-\rho)}\right] 
 \end{equation} 
where $c_1$ and $c_2$ are constants that do not depend on $\rho$ or $N$.  If $N$ itself is small, 
then the right expression in the above $\min[\cdot, \cdot]$ can be smaller than the left expression (i.e., 
the previous delay bound (\ref{eq:old-onoff}) can be the same as our new delay bound in the case when
$N$ is small). 
However, if the loading $\rho$ is held fixed as $N$ is scaled to infinity, then the left expression in the 
$\min[\cdot, \cdot]$ is always smaller and demonstrates $O(1)$ average 
delay (see also simulations in Figs. \ref{fig:1} and \ref{fig:2} of Section \ref{section:sim}). 
Thus, LCQ is \emph{order-optimal}
with respect to $N$.  However, the left delay bound has a worse asymptotic in $\rho$, and so it would
be worse than the right bound in the opposite case when $N$ is fixed and $\rho$ is scaled to $1$.

\subsection{Lyapunov Drift Analysis} 

To prove Theorem \ref{thm:onoff1}, it suffices to consider only the case $K < N$, as the delay bound in the 
opposite case $K \geq N$ is identical to the previous delay bound (\ref{eq:old-onoff}).
Let $\bv{Q}(t) = (Q_1(t), \ldots, Q_N(t))$ be the vector of queue backlogs. 
Define $Q_{tot}(t)$ as the sum queue backlog in all queues of the system: 
\begin{equation} \label{eq:q-tot}
Q_{tot}(t) \defequiv \sum_{i=1}^N Q_i(t)  
\end{equation} 
Define the following Lyapunov function:  %of the queue backlog vector $\bv{Q}(t)$: 
\begin{eqnarray}
&L(\bv{Q}(t)) \defequiv \frac{1}{2} \sum_{i=1}^N Q_i(t)^2  + \frac{\theta}{2}\left(\sum_{j=1}^N Q_j(t)\right)^2&\label{eq:lyapunov-function1} 
\end{eqnarray} 
where $\theta$ is a positive constant to be determined later. 
Thus, $L(\bv{Q}(t)) = \frac{1}{2} \sum_{i=1}^N Q_i(t)^2 + \frac{\theta}{2}Q_{tot}(t)^2$. 
This Lyapunov function uses the standard sum of squares of queue length,  and 
adds a new term that is the square of the total queue backlog.
This new term incorporates the \emph{queue grouping} concept similar 
to \cite{neely-order-optimal-ton}, and will be important in obtaining tight delay bounds. 
The technique of composing this Lyapunov function as a sum of two different 
quadratic terms weighted by a $\theta$ constant shall be useful in analyzing both stability 
and delay in  two different
modes of network operation, and is inspired by a 
similar technique  
 used in \cite{greedy-wu-srikant-j} to analyze stability in a very 
 different context.  
 Specifically, work in \cite{greedy-wu-srikant-j} considers
 multi-hop networks with greedy maximal scheduling and achieves stability results
 when input rates are a constant factor (such as a factor of 2) 
 away from the capacity region boundary. 
 
 Here, we consider a single-hop network with time-varying channels, and obtain both
 stability and order-optimal delay results for all input rates inside the capacity region.
 The intuition on why this 2-part Lyapunov function allows a tight delay bound is 
 as follows:  The first term is a standard sum of squares of queue length, and ensures
 stability of the algorithm while creating a large negative drift when the number of non-empty
 queues is small.  However, this term also has a 
 relatively small negative drift when the number of non-empty queues is
 large, preventing $O(1)$ delay analysis from this term alone.  
 To compensate, the second term is a square of the sum of all queues, which does
 not significantly affect the drift of the first term when the number of non-empty queues
 is small, but creates a large negative drift to \emph{help} the first term 
 when the number of non-empty queues is large.
% $\Lambda$. 

 The queue dynamics (\ref{eq:dynamics}) can be rewritten as follows:
\begin{equation}  \label{eq:dynamics-tilde}  
  Q_i(t+1) = Q_i(t) - \tilde{\mu}_i(t) + A_i(t) 
  \end{equation} 
 where $\tilde{\mu}_i(t) = \min[Q_i(t), \mu_i(t)]$.  Define $\tilde{\mu}_{tot}(t) = \sum_{i=1}^N \tilde{\mu}_i(t)$, being either $0$ or $1$, and being $1$ if and only if 
 the system serves a packet on slot $t$. 
 The dynamics for $Q_{tot}(t)$ are given by: 
 \begin{equation} \label{eq:dynamics-qtot} 
  Q_{tot}(t+1)  = Q_{tot}(t) -\tilde{\mu}_{tot}(t) + A_{tot}(t) 
  \end{equation} 
 where $A_{tot}(t) = \sum_{i=1}^N A_i(t)$. 
 Let $\bv{Q}(t)$ be the stochastic queue evolution process for  a given control policy. 
Define the one-step conditional Lyapunov drift as follows:\footnote{Strictly speaking, 
correct notation should be $\Delta(\bv{Q}(t), t)$, as the drift could be from a non-stationary
policy, although we use the simpler notation $\Delta(\bv{Q}(t))$ 
as formal notation for the right hand side of (\ref{eq:drift-def}).}
\begin{equation} \label{eq:drift-def}
 \Delta(\bv{Q}(t)) \defequiv \expect{L(\bv{Q}(t+1)) - L(\bv{Q}(t))\left|\right.\bv{Q}(t)} 
 \end{equation}

 \begin{lem} \label{lem:drift1}  The Lyapunov drift $\Delta(\bv{Q}(t))$ for the ON/OFF channel model 
 satisfies: 
 \begin{eqnarray*}
 \Delta(\bv{Q}(t)) &=& \expect{B(t)\left|\right.\bv{Q}(t)} \\
 && -  \sum_{i=1}^N Q_i(t)\expect{\mu_i(t) - \lambda_i\left|\right.\bv{Q}(t)} \\
 && - \theta Q_{tot}(t) \expect{\tilde{\mu}_{tot}(t) - \lambda_{tot}\left|\right.\bv{Q}(t)} 
 \end{eqnarray*}
 where $\mu_i(t)$ and $\tilde{\mu}_{tot}(t)$ correspond to the LCQ policy, and 
 where $B(t)$ is given by: 
 \begin{eqnarray}
 \hspace{-.2in}B(t) &\defequiv& \frac{\tilde{\mu}_{tot}(t)}{2} + \frac{1}{2} \sum_{i=1}^N [A_i(t)^2 
 - 2A_i(t)\tilde{\mu}_i(t)]  \nonumber \\
 && + \frac{\theta}{2}[A_{tot}(t)^2+ \tilde{\mu}_{tot}(t) - 2\tilde{\mu}_{tot}(t)A_{tot}(t)] \label{eq:bt}
 \end{eqnarray}
 \end{lem}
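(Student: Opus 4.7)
The plan is a direct drift computation: expand $L(\bv{Q}(t+1))-L(\bv{Q}(t))$ term by term using the exact (rather than max-truncated) queue dynamics (\ref{eq:dynamics-tilde}) and (\ref{eq:dynamics-qtot}), take conditional expectation, and use independence of $\bv{A}(t)$ from $\bv{Q}(t)$ to replace arrival terms with their means. The only non-mechanical observations are two small algebraic identities specific to the ON/OFF model that let the ``effective'' service $\tilde{\mu}_i$ be traded for the nominal service $\mu_i$.

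First I would square (\ref{eq:dynamics-tilde}) to obtain, for each $i$,
\begin{equation*}
Q_i(t+1)^2 = Q_i(t)^2 + \tilde{\mu}_i(t)^2 + A_i(t)^2 - 2Q_i(t)\tilde{\mu}_i(t) + 2Q_i(t)A_i(t) - 2A_i(t)\tilde{\mu}_i(t),
\end{equation*}
then sum over $i$. Two ON/OFF observations simplify the result: (i) since at most one channel is served per slot and $\mu_i(t)\in\{0,1\}$, we have $\tilde{\mu}_i(t)\in\{0,1\}$ and $\sum_i \tilde{\mu}_i(t)^2 = \sum_i \tilde{\mu}_i(t) = \tilde{\mu}_{tot}(t)$; (ii) $Q_i(t)\tilde{\mu}_i(t) = Q_i(t)\mu_i(t)$, because $\tilde{\mu}_i$ only differs from $\mu_i$ when $Q_i(t)=0$, in which case both products vanish. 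Multiplying through by $1/2$ gives the contribution of the $\sum_i Q_i^2$ piece of $L$.

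Next, I would square (\ref{eq:dynamics-qtot}) in the same way, use $\tilde{\mu}_{tot}(t)^2 = \tilde{\mu}_{tot}(t)$, and multiply by $\theta/2$ to get the contribution of the $Q_{tot}^2$ piece. Adding the two expansions yields an exact (sample-path) expression for $L(\bv{Q}(t+1))-L(\bv{Q}(t))$, which I would group into three parts: a part depending only on arrivals and services (to be identified with $B(t)$), a part $-\sum_i Q_i(t)\mu_i(t) + \sum_i Q_i(t) A_i(t)$ coming from the cross terms of the $\sum Q_i^2$ piece, and a part $-\theta Q_{tot}(t)\tilde{\mu}_{tot}(t) + \theta Q_{tot}(t) A_{tot}(t)$ from the cross terms of the $Q_{tot}^2$ piece.

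Finally I would take $\expect{\cdot\,|\,\bv{Q}(t)}$. Since $A_i(t)$ is independent of $\bv{Q}(t)$ with mean $\lambda_i$, and $A_{tot}(t)$ has mean $\lambda_{tot}$, the cross terms collapse to $-\sum_i Q_i(t)\expect{\mu_i(t)-\lambda_i\,|\,\bv{Q}(t)}$ and $-\theta Q_{tot}(t)\expect{\tilde{\mu}_{tot}(t)-\lambda_{tot}\,|\,\bv{Q}(t)}$, and the leftover arrivals/services coalesce into precisely the $B(t)$ defined in (\ref{eq:bt}). No real obstacle arises; the only place where care is needed is in tracking the bookkeeping and remembering the ON/OFF identities $\tilde{\mu}_i^2 = \tilde{\mu}_i$ and $Q_i\tilde{\mu}_i = Q_i\mu_i$, which are what allow the drift to be expressed cleanly in terms of $\mu_i(t)$ (used by LCQ for its scheduling decision) rather than the truncated quantity $\tilde{\mu}_i(t)$.
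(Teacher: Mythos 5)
Your proposal is correct and follows essentially the same route as the paper's Appendix A: square the per-queue dynamics (\ref{eq:dynamics-tilde}) and the total-backlog dynamics (\ref{eq:dynamics-qtot}), invoke the ON/OFF identities $\tilde{\mu}_i(t)^2 = \tilde{\mu}_i(t)$, $Q_i(t)\tilde{\mu}_i(t) = Q_i(t)\mu_i(t)$, and $\tilde{\mu}_{tot}(t)^2 = \tilde{\mu}_{tot}(t)$, then take conditional expectations and collect the arrival/service terms into $B(t)$. The only cosmetic difference is that the paper organizes the computation as two separate drifts $\Delta_1$ and $\theta\Delta_2$ before summing, which is just bookkeeping.
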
 
 
 \begin{proof} (Lemma \ref{lem:drift1}) 
 See Appendix A. 
 \end{proof}

Now note that the LCQ algorithm chooses $\bv{\mu}(t) \in \feasible(t)$ on each slot 
$t$ to maximize
 $\sum_{i=1}^N Q_i(t) \mu_i(t)$, and hence: 
 \[ \sum_{i=1}^N Q_i(t)\mu_i(t) \geq \sum_{i=1}^NQ_i(t)\mu_i^*(t)  \]
 where $\bv{\mu}^*(t) = (\mu_1^*(t), \ldots, \mu_N^*(t))$ is any other feasible
 transmission rate vector in $\feasible(t)$.  It follows that the above inequality is
 preserved when taking conditional expectations given the current
 $\bv{Q}(t)$ value.  Plugging this result into the second 
 term on the right hand side of the drift expression in Lemma \ref{lem:drift1} thus
 yields: 
 \begin{eqnarray}
 \Delta(\bv{Q}(t)) &\leq& \expect{B(t)\left|\right.\bv{Q}(t)} \nonumber \\
 && - \sum_{i=1}^N Q_i(t)\expect{\mu_i^*(t) - \lambda_i\left|\right.\bv{Q}(t)} \nonumber \\
 && - \theta Q_{tot}(t) \expect{\tilde{\mu}_{tot}(t) - \lambda_{tot}\left|\right.\bv{Q}(t)}   \label{eq:drifty}
 \end{eqnarray}
where $\bv{\mu}^*(t) = (\mu_1^*(t), \ldots, \mu_N^*(t))$ is any other feasible control
action on slot $t$. Note that $\tilde{\mu}_{tot}(t)$ in the above expression still corresponds
to the LCQ policy. 

Let $L(t)$ represent the number of non-empty queues on slot $t$, so that 
$0 \leq L(t) \leq N$. 

\begin{itemize} 
\item \emph{\underline{Case 1}} ($L(t) \leq K$):  Suppose $L(t) \leq K$, and 
let $\script{L}(t)$ represent the set of non-empty queue
indices. Recall that $\mu_K^{sym} \bv{1}_{\script{L}(t)} \in \Lambda$ (by Lemma \ref{lem:symmetry-fact})
and that $\bv{\lambda}/\rho \in 
\Lambda$ (by assumption that $\bv{\lambda} \in \rho\Lambda$).  
By taking a convex combination of these two vectors and using 
convexity of the set $\Lambda$, it follows that: 
\begin{equation} \label{eq:convexity1} 
 \bv{\lambda} + (1-\rho)\mu_K^{sym} \bv{1}_{\script{L}(t)} \in \Lambda  
 \end{equation} 
Now let $\bv{\mu}^*(t)$ be the stationary randomized policy that makes decisions
based only on the current channel state, and that yields: 
\[ \expect{\bv{\mu}^*(t)} = \bv{\lambda} + (1-\rho)\mu_K^{sym}\bv{1}_{\script{L}(t)} \]
Such a policy exists by (\ref{eq:convexity1}) and 
Theorem \ref{thm:capacity}. Thus, 
for all $i \in \script{L}(t)$ we have:
\begin{equation} \label{eq:stat1} 
\expect{\mu_i^*(t)} = \lambda_i  + (1-\rho)\mu_K^{sym} 
\end{equation}  
Using (\ref{eq:stat1}) in the drift inequality (\ref{eq:drifty}) and noting that $Q_i(t) = 0$ if 
$i \notin \script{L}(t)$ yields:
\begin{eqnarray*}
\Delta(\bv{Q}(t)) &\leq& \expect{B(t)\left|\right.\bv{Q}(t)} - \sum_{i=1}^N Q_i(t)(1-\rho)\mu_{K}^{sym} \\
&& +   \theta Q_{tot}(t)\lambda_{tot}  \\
&=& \expect{B(t)\left|\right.\bv{Q}(t)} \\
&& - Q_{tot}(t)[(1-\rho)\mu_K^{sym} - \theta\lambda_{tot} ]
\end{eqnarray*}
Define $\epsilon$ as follows: 
\begin{equation}\label{eq:epsilon} 
\epsilon \defequiv [(1-\rho)\mu_K^{sym} - \theta\lambda_{tot} ]
\end{equation} 
It follows that: 
\begin{equation} \label{eq:case1-drift} 
\Delta(\bv{Q}(t)) \leq \expect{B(t)\left|\right.\bv{Q}(t)} - \epsilon Q_{tot}(t)
\end{equation} 

\item \emph{\underline{Case 2}} ($L(t) > K$):  Suppose $L(t) >K$, and again let $\script{L}(t)$ represent the set of 
non-empty queue indices.  Note that $\bv{\lambda}/\rho \in \Lambda$ and $\mu_N^{sym}\bv{1} \in \Lambda$, where $\bv{1}$ is the
all 1 vector. By convexity of $\Lambda$, the convex combination is also in $\Lambda$: 
\[ \bv{\lambda} + (1-\rho)\mu_N^{sym}\bv{1} \in \Lambda \]
Now let $\bv{\mu}^*(t)$ be the stationary randomized policy that makes
decisions independent of queue backlog, and that yields for all $i \in \{1, \ldots, N\}$:  
\begin{equation} \label{eq:case2a} 
 \expect{\mu_i^*(t)} = \lambda_i + (1-\rho)\mu_N^{sym} 
 \end{equation} 
Such a policy exists by Theorem \ref{thm:capacity}. 
Note that when the number of non-empty queues is greater than $K$, there is a packet departure
under the LCQ policy with probability at least one minus the product of the $K+1$ largest
OFF probabilities: 
\begin{equation} \label{eq:case2b} 
\expect{\tilde{\mu}_{tot}\left|\right.\bv{Q}(t)} \geq 1 - \Pi_{i \in \hat{\script{L}}_{K+1}}(1-p_i) \geq r_{K+1}
\end{equation}
where $\hat{\script{L}}_{K+1}$ represents the  set of $K+1$ 
links with the smallest success probabilities. Plugging (\ref{eq:case2a}) and (\ref{eq:case2b})
into the drift inequality (\ref{eq:drifty}) yields: 
\begin{eqnarray*}
\Delta(\bv{Q}(t)) &\leq& \expect{B(t)\left|\right.\bv{Q}(t)} \\
&&  \hspace{-.15in} - Q_{tot}(t)[(1-\rho)\mu_N^{sym} + \theta(r_{K+1} - \lambda_{tot}) ]
\end{eqnarray*}
\end{itemize}

To equalize the drift in both Case 1 and Case 2, we choose $\theta$ to satisfy: 
\[ \epsilon  = (1-\rho)\mu_N^{sym} + \theta(r_{K+1} - \lambda_{tot}) \]
Thus (using (\ref{eq:epsilon})): 
\begin{eqnarray*}
\theta &=& \frac{(1-\rho)(\mu_K^{sym} - \mu_N^{sym})}{r_{K+1}} \\
\epsilon &=& \frac{(1-\rho)\left[ \mu_N^{sym}\lambda_{tot} +
 \mu_K^{sym}(r_{K+1}-\lambda_{tot}) \right] }{r_{K+1}}
\end{eqnarray*}

Recall that we have assumed $K<N$ (as Theorem \ref{thm:onoff1} is trivially true if $K \geq N$, as
described at the beginning of this subsection).  Thus, we have $\mu_K^{sym} > \mu_N^{sym}$ (by 
Lemma \ref{lem:mu-sym}), and so we indeed have $\theta >0$. 
Further, because $r_{K+1} >  \lambda_{tot}$,  
we have that $\epsilon>0$.
Therefore, the drift inequality (\ref{eq:case1-drift}) holds in both Case 1 and Case 2 (and
hence holds for all $t$ and all  $\bv{Q}(t)$). 
We now use the following well known 
Lyapunov drift lemma (see, for example, \cite{now} for a proof):

\begin{lem} \label{lem:lyap-drift} (Lyapunov Drift \cite{now}) If the drift $\Delta(\bv{Q}(t))$ of a non-negative
Lyapunov function satisfies the following for all $t$ and all $\bv{Q}(t)$: 
\[ \Delta(\bv{Q}(t)) \leq \expect{B(t)\left|\right.\bv{Q}(t)}  - \epsilon \expect{f(t)\left|\right.\bv{Q}(t)}  \]
for some stochastic processes $B(t)$, $f(t)$, and some constant $\epsilon>0$, then: 
\[ \overline{f} \leq \overline{B}/\epsilon \]
where
\begin{eqnarray*}
\overline{f} &\defequiv& \limsup_{t\rightarrow\infty} \frac{1}{t} \sum_{\tau=0}^{t-1} \expect{f(\tau)} \\
\overline{B} &\defequiv& \limsup_{t\rightarrow\infty} \frac{1}{t}\sum_{\tau=0}^{t-1} \expect{B(\tau)}  \Box
\end{eqnarray*}
\end{lem}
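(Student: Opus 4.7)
The plan is to prove this by the standard telescoping-drift argument. First I would take the full (unconditioned) expectation of the hypothesis inequality using the tower property: since $\Delta(\bv{Q}(t)) = \expect{L(\bv{Q}(t+1)) - L(\bv{Q}(t))\left|\right.\bv{Q}(t)}$, taking $\expect{\cdot}$ of both sides of the assumed bound yields
\[
\expect{L(\bv{Q}(t+1))} - \expect{L(\bv{Q}(t))} \leq \expect{B(t)} - \epsilon\,\expect{f(t)},
\]
where the conditional expectations on $\bv{Q}(t)$ collapse by iterated expectation.

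Next I would sum this inequality over $\tau = 0, 1, \ldots, t-1$ to produce a telescoping sum on the left. The left side collapses to $\expect{L(\bv{Q}(t))} - \expect{L(\bv{Q}(0))}$, giving
\[
\expect{L(\bv{Q}(t))} - \expect{L(\bv{Q}(0))} \leq \sum_{\tau=0}^{t-1}\expect{B(\tau)} - \epsilon\sum_{\tau=0}^{t-1}\expect{f(\tau)}.
\]
Rearranging and using non-negativity of $L$ to drop the $\expect{L(\bv{Q}(t))} \geq 0$ term, I would obtain
\[
\epsilon \sum_{\tau=0}^{t-1}\expect{f(\tau)} \leq \expect{L(\bv{Q}(0))} + \sum_{\tau=0}^{t-1}\expect{B(\tau)}.
\]

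Finally I would divide both sides by $t$ and take $\limsup_{t\to\infty}$. Assuming (as is standard and implicit here) that $\expect{L(\bv{Q}(0))}$ is finite, the term $\expect{L(\bv{Q}(0))}/t$ vanishes, and the remaining $\limsup$ on the right is exactly $\overline{B}$ by definition while the left becomes $\epsilon \overline{f}$. Dividing by $\epsilon > 0$ yields $\overline{f} \leq \overline{B}/\epsilon$, as claimed.

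The argument is essentially routine; the only subtle point is ensuring $\expect{L(\bv{Q}(0))} < \infty$ so that the initial-condition term drops out in the limit (this is typically handled by assuming deterministic or integrable initial conditions, and is standard in the Lyapunov-drift framework of \cite{now}). The non-negativity of $L$ is what lets us discard $\expect{L(\bv{Q}(t))}$ on the left after rearrangement — without it, the telescoping bound would not yield a one-sided control on the time-averaged $f$.
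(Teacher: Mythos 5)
Your proof is correct and is essentially the standard telescoping-drift argument that the cited reference \cite{now} gives for this lemma (the paper itself defers the proof to that reference rather than proving it inline). The one subtlety you flag --- finiteness of $\expect{L(\bv{Q}(0))}$ so the initial-condition term vanishes after dividing by $t$ --- is exactly the standard implicit assumption, and your use of non-negativity of $L$ to discard $\expect{L(\bv{Q}(t))}$ is the right (and necessary) step.
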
 

Using this Lyapunov drift lemma in (\ref{eq:case1-drift}) (using $f(t) = Q_{tot}(t)$) yields: 
\[ \overline{Q}_{tot } \leq \overline{B}/\epsilon \]
We note that because the system evolves according to a Markov chain with a countably infinite
state space, the time averages are well defined (so that the $\limsup$ can be replaced by a regular
limit).  Further, using the fact  that
$\lim_{t\rightarrow\infty} \frac{1}{t} \sum_{\tau=0}^{t-1} \expect{\tilde{\mu}_i(\tau)} = \lambda_i$, the
value of $\overline{B}$ can be seen to equal the value $B_{\theta}$ defined
in (\ref{eq:B}), proving Theorem \ref{thm:onoff1}.

\section{A Tighter Bound  for ``$f$-Balanced Traffic''} \label{section:onoff2}

Here we present a tighter bound on average backlog and delay of the LCQ algorithm
for the ON/OFF channel model.  Our bound in this section 
is of the form $c\log(\frac{1}{1-\rho})/(1-\rho)$, which is still $O(1)$ with respect
to the network size $N$, but yields a better asymptotic in $\rho$.   Unfortunately, 
our analysis does not hold for all rate  vectors $\bv{\lambda}$ inside the capacity
region $\Lambda$.  Rather, we make the following assumption about a more ``balanced''
traffic rate vector.
Let $\bv{\lambda}  = (\lambda_1, \ldots, \lambda_N)$,  and without loss of generality assume
that $\lambda_i > 0$ for all $i \in \{1, \ldots, N\}$ (else, we can redefine $N$ to be the number
of links with non-zero rates). Define $\lambda_{tot} = \sum_{i=1}^N 
\lambda_i$ and $\lambda_{av} = \lambda_{tot}/N$.  
We say that $\bv{\lambda}$ has \emph{$f$-balanced rates}
if there is a constant $f$ such that: 
\begin{equation} \label{eq:order-balanced} 
\lambda_i \leq \lambda_{av} + f \: \: \mbox{ for all $i \in \{1, \ldots, N\}$} 
\end{equation} 
That is, $\bv{\lambda}$ is $f$-balanced if no individual
traffic rate is  more than an amount $f$ above the average rate $\lambda_{av}$. 
Clearly any \emph{uniform} traffic rate vector is $f$-balanced
for $f=0$.  
However, this definition of $f$-balanced rates also captures a 
large class of heterogeneous arrival rate vectors.   We shall prove our delay results under
the assumption that $f$ is suitably small.  A similar assumption is 
used in \cite{neely-order-optimal-ton}, and our delay analysis 
relies heavily on the queue-grouping techniques used there. 

 \subsection{The Queue-Grouped Lyapunov Function} 
 
Fix an integer $K$ such that $1 \leq K \leq N$.  Define 
$\hat{N}$ as the smallest multiple of $K$ that is larger than or equal to $N$: 
\begin{equation} \label{eq:nhat} 
 \hat{N} = \lceil N/K \rceil K 
 \end{equation} 
Now define a new rate vector $\hat{\bv{\lambda}} = (\lambda_{1}, \ldots, \lambda_N, 0, 0, \ldots, 0)$,
where the last $\hat{N} - N$ entries are zero. 
Define $\hat{N} - N$ ``fictitious'' queues for these last dimensions (these queues always have
zero backlog, but shall be convenient to define for counting purposes). 
Define $\script{G}_K$ as the set of all possible partitions of the link set 
$\{1, \ldots, \hat{N}\}$ into  $K$ disjoint sets, each 
with an equal size of  $\hat{N}/K$ links. Let $g \in \script{G}_K$
denote a particular partition, and define $\script{L}_1^{(g)}, \ldots, \script{L}_K^{(g)}$
as the collection of sets corresponding to $g$ (so that the union
$\cup_{k=1}^K \script{L}_{k}^{(g)}$ is equal to $\{1, \ldots, \hat{N}\}$, and the intersection 
$\script{L}_{n}^{(g)} \cap \script{L}_{m}^{(g)}$ is 
empty for all $m \neq n$, where $m, n \in \{1, \ldots, K\}$). 

For a particular partition $g$, define $Q_k^{(g)}(t)$ as the sum of all queue backlogs
in the $k$th set of $g$: 
\[ Q_k^{(g)}(t) \defequiv \sum_{i\in \script{L}_k^{(g)}} Q_i(t) \]
Define the following \emph{queue-grouped Lyapunov function}: 
\begin{equation} \label{eq:queue-group-lyap}
L(\bv{Q}(t)) \defequiv \frac{1}{2}\sum_{g \in \script{G}_K} \sum_{k=1}^K (Q_k^{(g)}(t))^2
\end{equation}  
This is similar to the Lyapunov function of \cite{neely-order-optimal-ton}, with the exception
that it sums over all possible partitions into $K$ disjoint groups.  For intuition, we note that
the $f$-balanced traffic assumption allows the ``Largest Connected Group'' (LCG) argument 
of \cite{neely-order-optimal-ton} to proceed on \emph{any set of $K$ disjoint groups}.  However, 
once we fix  a particular group, minimizing the drift gives rise to the LCG algorithm rather than the 
``max-weight'' LCQ algorithm.   Changing the Lyapunov function by 
summing over all possible $K$ disjoint groups yields a similar negative drift as in LCG, 
but the ``symmetry'' induced by summing over all groups
remarkably makes the drift minimizing algorithm the LCQ algorithm (rather than LCG).

Define $A_k^{(g)}(t)$ and $\tilde{\mu}_k^{(g)}(t)$ as the sum arrivals and departures
from the $k$th group of the partition  $g$: 
\begin{eqnarray*}
A_k^{(g)}(t) &\defequiv& \sum_{i\in\script{L}_k^{(g)}} A_i(t) \\
\tilde{\mu}_k^{(g)}(t) &\defequiv& \sum_{i\in\script{L}_k^{(g)}} \tilde{\mu}_i(t)
\end{eqnarray*}
The dynamics for the $k$th group of partition $g$ 
thus satisfy: 
\begin{equation} \label{eq:group-dynamics} 
Q_k^{(g)}(t+1) = Q_k^{(g)}(t) - \tilde{\mu}_k^{(g)}(t) + A_k^{(g)}(t) 
\end{equation} 
Define the Lyapunov drift $\Delta(\bv{Q}(t))$ as before (given in 
(\ref{eq:drift-def})). 

\begin{lem} \label{lem:drift-group} For a general scheduling policy, 
the Lyapunov drift satisfies: 
\begin{eqnarray*}
\Delta(\bv{Q}(t)) &=& \expect{C(t)\left|\right.\bv{Q}(t)} \\
&& - \sum_{g \in \script{G}_K}\sum_{k=1}^K Q_k^{(g)}(t)\expect{\tilde{\mu}_k^{(g)}(t) - \lambda_k^{(g)}\left|\right.\bv{Q}(t)}
\end{eqnarray*}
where $\lambda_k^{(g)}\defequiv \sum_{i \in \script{L}_k^{(g)}} \lambda_i$, and where
$C(t)$ is defined: 
\begin{eqnarray}
&&\hspace{-.4in} C(t) \defequiv \frac{1}{2}\sum_{g \in \script{G}_K}\sum_{k=1}^K\left[\tilde{\mu}_k^{(g)}(t) + A_k^{(g)}(t)^2 - 2\tilde{\mu}_k^{(g)}(t) A_{k}^{(g)}(t)\right] \nonumber \\
&&\hspace{-.4in} \label{eq:c-t-onoff2}
\end{eqnarray}
\end{lem}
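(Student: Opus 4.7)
The plan is to carry out a direct drift computation, group by group, using the exact per-group dynamics in (\ref{eq:group-dynamics}) rather than anything specific to the LCQ policy. This is because the claim is explicitly stated for a \emph{general} scheduling policy, so none of the max-weight structure should enter.

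First I would square both sides of (\ref{eq:group-dynamics}). Because $\tilde{\mu}_i(t)=\min[Q_i(t),\mu_i(t)]$ is defined precisely so that the recursion is an equality (not an inequality with a $\max[\cdot,0]$), the same holds after summing over $i\in\script{L}_k^{(g)}$, and hence:
\begin{equation*}
(Q_k^{(g)}(t+1))^2 - (Q_k^{(g)}(t))^2
= (\tilde{\mu}_k^{(g)}(t)-A_k^{(g)}(t))^2 - 2 Q_k^{(g)}(t)(\tilde{\mu}_k^{(g)}(t)-A_k^{(g)}(t)).
\end{equation*}
Summing over $k\in\{1,\ldots,K\}$ and $g\in\script{G}_K$, multiplying by $1/2$, and using the definition (\ref{eq:queue-group-lyap}) yields an exact formula for $L(\bv{Q}(t+1))-L(\bv{Q}(t))$ as a quadratic term plus a linear-in-$Q$ term.

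The only nontrivial simplification is the quadratic term $(\tilde{\mu}_k^{(g)}(t))^2$. Here I would invoke the key observation that in the ON/OFF single-server model, the total service per slot satisfies $\tilde{\mu}_{tot}(t)\in\{0,1\}$, so $\tilde{\mu}_k^{(g)}(t)=\sum_{i\in\script{L}_k^{(g)}}\tilde{\mu}_i(t)\in\{0,1\}$ as well (it is at most $\tilde{\mu}_{tot}(t)$). This gives $(\tilde{\mu}_k^{(g)}(t))^2=\tilde{\mu}_k^{(g)}(t)$, which is exactly what is needed to produce the $\tilde{\mu}_k^{(g)}(t)$ term appearing in the definition (\ref{eq:c-t-onoff2}) of $C(t)$. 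Expanding $(\tilde{\mu}_k^{(g)}(t)-A_k^{(g)}(t))^2$ and collecting terms then gives $L(\bv{Q}(t+1))-L(\bv{Q}(t)) = C(t) - \sum_{g}\sum_k Q_k^{(g)}(t)(\tilde{\mu}_k^{(g)}(t)-A_k^{(g)}(t))$.

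Finally I would take conditional expectations given $\bv{Q}(t)$. Since the arrivals $A_i(t)$ are i.i.d.\ across slots and independent of $\bv{Q}(t)$ (which is determined by histories up to slot $t$), $\expect{A_k^{(g)}(t)\mid \bv{Q}(t)}=\lambda_k^{(g)}$, yielding the stated identity for $\Delta(\bv{Q}(t))$. I do not expect any real obstacle here; the one subtle point worth spelling out is the $\{0,1\}$-valuedness of $\tilde{\mu}_k^{(g)}(t)$, which is what eliminates the otherwise inconvenient $(\tilde{\mu}_k^{(g)}(t))^2$ cross terms and lets the group-wise bookkeeping collapse to exactly (\ref{eq:c-t-onoff2}).
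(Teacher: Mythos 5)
Your proposal is correct and follows essentially the same route as the paper: square the exact per-group recursion (\ref{eq:group-dynamics}), use the fact that at most one packet is served per slot so $\tilde{\mu}_k^{(g)}(t)^2 = \tilde{\mu}_k^{(g)}(t)$, sum over $k$ and $g$, and take conditional expectations (with $\expect{A_k^{(g)}(t)\mid\bv{Q}(t)}=\lambda_k^{(g)}$ by independence of arrivals from the queue state). The $\{0,1\}$-valuedness of the group service variable that you single out is exactly the observation the paper's proof invokes.
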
 
\begin{proof} 
The proof is similar to the proof of Lemma \ref{lem:drift1}.  Specifically, note from (\ref{eq:group-dynamics})
that: 
\begin{eqnarray*}
&\hspace{-.6in} Q_k^{(g)}(t+1)^2 - Q_k^{(g)}(t)^2 = 
\tilde{\mu}_k^{(g)}(t) + A_k^{(g)}(t)^2 \\
&\hspace{+.3in} - 2\tilde{\mu}_k^{(g)}(t)A_k^{(g)}(t) - 2Q_k^{(g)}(t)[\tilde{\mu}_k^{(g)}(t) - A_k^{(g)}(t)]
\end{eqnarray*}
where we have used the fact that $\tilde{\mu}_k^{(g)}(t)^2 = \tilde{\mu}_k^{(g)}(t)$. 
The result follows by summing over all $k$ and all groups, and taking conditional expectations. 
\end{proof} 

Remarkably, we next show that the ``max-weight'' LCQ algorithm for this ON/OFF channel model
minimizes the final term in the right hand side of the above drift expression.  

\begin{lem} \label{lem:max-weight-group} (Max Weight Matching)  
Every slot $t$, the  LCQ algorithm chooses a transmission rate vector $\bv{\mu}(t) \in \feasible(t)$
that maximizes the following expression over all alternative feasible transmission rate
vectors: 
\[ \sum_{g \in \script{G}_K} \sum_{k=1}^K Q_k^{(g)}(t) \tilde{\mu}_k^{(g)}(t) \]
\end{lem}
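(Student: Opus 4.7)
The plan is to expand the objective, reorder the sums, and then use the symmetry of summing over \emph{all} equal-sized partitions to reduce the expression to a non-negative linear combination of two quantities, each of which the LCQ rule independently maximizes.

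First, I substitute $Q_k^{(g)}(t) = \sum_{i \in \script{L}_k^{(g)}} Q_i(t)$ and $\tilde{\mu}_k^{(g)}(t) = \sum_{j \in \script{L}_k^{(g)}} \tilde{\mu}_j(t)$ into the product, obtaining
\[
\sum_{g \in \script{G}_K} \sum_{k=1}^K Q_k^{(g)}(t) \tilde{\mu}_k^{(g)}(t) \;=\; \sum_{i,j=1}^{\hat{N}} Q_i(t) \tilde{\mu}_j(t) \cdot N_{ij},
\]
where $N_{ij}$ is the number of pairs $(g,k)$ such that $i$ and $j$ both lie in $\script{L}_k^{(g)}$. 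For $i=j$ we have $N_{ii} = |\script{G}_K|$ (since every partition places $i$ in exactly one group). For $i \neq j$, the symmetry of $\script{G}_K$ under index permutations of $\{1,\ldots,\hat{N}\}$ forces $N_{ij}$ to equal a common constant $M$ independent of which distinct pair is chosen (a direct count gives $M = |\script{G}_K|(\hat{N}/K-1)/(\hat{N}-1)$, but the precise value will not matter).

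Next, splitting the sum into diagonal and off-diagonal parts and recombining yields
\[
\sum_{g}\sum_k Q_k^{(g)}(t)\tilde{\mu}_k^{(g)}(t) \;=\; \alpha \sum_{i=1}^{N} Q_i(t)\tilde{\mu}_i(t) \;+\; \beta\, Q_{tot}(t)\,\tilde{\mu}_{tot}(t),
\]
with $\alpha = |\script{G}_K| - M$ and $\beta = M$, both non-negative (fictitious queues contribute zero since $Q_i(t)=\tilde{\mu}_i(t)=0$ for $i > N$). Because $\tilde{\mu}_i(t) = \mu_i(t)$ whenever $Q_i(t) \geq 1$ and both vanish when $Q_i(t)=0$, the first sum equals $\sum_i Q_i(t)\mu_i(t)$, which is precisely the quantity the max-weight/LCQ rule is defined to maximize over $\bv{\mu}(t) \in \feasible(t)$. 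For the second term, $Q_{tot}(t)$ is fixed by the current state, and LCQ, by selecting the link $i^{*}$ with the largest $Q_i(t)S_i(t)$, achieves $\tilde{\mu}_{tot}(t) = 1$ whenever any non-empty queue has a connected channel, which is the maximum value $\tilde{\mu}_{tot}(t)$ can take.

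Since LCQ simultaneously maximizes both terms and the coefficients $\alpha,\beta$ are non-negative, it maximizes the entire weighted sum, proving the lemma. The only non-trivial step is the symmetry observation that $N_{ij}$ is the same for every off-diagonal pair — this is what collapses a seemingly group-based objective into the clean two-term form, and is also the reason the drift-minimizer turns out to be the plain LCQ rule rather than the group-based LCG rule that would otherwise arise if only one partition were used.
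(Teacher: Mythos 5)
Your proof is correct and follows essentially the same route as the paper's Appendix B: both use the symmetry of $\script{G}_K$ to count how often each pair $(i,j)$ shares a group, collapse the objective into a non-negative combination of $\sum_i Q_i(t)\mu_i(t)$ and $Q_{tot}(t)\tilde{\mu}_{tot}(t)$, and observe that LCQ maximizes the first by definition and the second by work conservation. Your $N_{ij}$ bookkeeping is just a slightly more explicit phrasing of the paper's counting step, with the same coefficients $\alpha = |\script{G}_K|\bigl(1-\tfrac{m-1}{\hat{N}-1}\bigr)$ and $\beta = |\script{G}_K|\tfrac{m-1}{\hat{N}-1}$.
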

\begin{proof} 
See Appendix B.
\end{proof} 

It follows that we can replace the variables $\tilde{\mu}_k^{(g)}(t)$ in the final term of
the drift expression in Lemma \ref{lem:drift-group}, which correspond
to the LCQ policy, with variables $\tilde{\mu}_k^{(g)*}(t)$ that correspond to any
other feasible rate vector $\bv{\mu}^*(t) \in \feasible(t)$, while creating an inequality
relationship: 
\begin{eqnarray}
\hspace{-.2in}\Delta(\bv{Q}(t)) &\leq& \expect{C(t)\left|\right.\bv{Q}(t)} \nonumber \\
&& \hspace{-.3in} - \sum_{g \in \script{G}_K}\sum_{k=1}^K Q_k^{(g)}(t)\expect{\tilde{\mu}_k^{(g)*}(t) - \lambda_k^{(g)}\left|\right.\bv{Q}(t)} \label{eq:group-drift-final}
\end{eqnarray}

The drift inequality (\ref{eq:group-drift-final}) is quite subtle:  It is defined
in terms of any other \emph{single} feasible rate vector $\bv{\mu}^*(t)$ (where this
vector does not depend on the partition $g$). 
Note that the variables $\tilde{\mu}_k^{(g)*}(t)$ are defined for different partitions $g\in\script{G}_K$, 
but for each particular $g$ these variables 
are still derived from the \emph{same}  vector 
$\bv{\mu}^*(t)$.  They are derived from $\bv{\mu}^*(t)$ 
by summing the components of this rate vector that have 
non-empty queues over the dimensions that correspond to 
the groups within the particular partition $g$.

\subsection{Optimizing the Drift Bound} 

Here we manipulate the sum in the right-hand side of (\ref{eq:group-drift-final})
to yield a useful drift bound. 

\begin{lem} \label{lem:group-lambda}
For any vector $\bv{\lambda} = (\lambda_1, \ldots, \lambda_{\hat{N}})$,  
if there is a value $\beta$
such that $0 < \beta < 1$ such that for all $i \in \{1, \ldots, N\}$ we have: 
\begin{equation} \label{eq:f-balance1}
\lambda_i \leq \frac{\lambda_{tot}}{\hat{N}} + \frac{\beta(1-\rho)}{K}  
\end{equation}
then: 
\begin{eqnarray*}
 \sum_{g\in\script{G}_K}\sum_{k=1}^K Q_k^{(g)}(t)\lambda_k^{(g)} &\leq& Q_{tot}(t)\left|\script{G}_K\right|\frac{[\lambda_{tot} + z\beta(1-\rho)]}{K} 
  \end{eqnarray*}
 where $\left|\script{G}_K\right|$ is the cardinality of $\script{G}_K$, $Q_{tot}(t)$
is the total sum backlog (defined in (\ref{eq:q-tot})), and $z$ is defined: 
\begin{equation} \label{eq:z} 
z \defequiv (1-1/K)/(1 - 1/\hat{N})
\end{equation}
\end{lem}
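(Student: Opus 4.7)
The plan is to prove the inequality by carefully swapping the order of summation, exploiting the symmetry of the set $\script{G}_K$ of all equipartitions of $\{1,\ldots,\hat{N}\}$ into $K$ blocks, and then invoking the $f$-balanced hypothesis \eqref{eq:f-balance1} at the very end.

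First I would expand each product as a double sum: $Q_k^{(g)}(t)\lambda_k^{(g)} = \sum_{i,j\in\script{L}_k^{(g)}} Q_i(t)\lambda_j$. Summing over $k$ collects exactly those pairs $(i,j)$ that lie together in some block of $g$, and summing over $g\in\script{G}_K$ then gives
\begin{equation*}
\sum_{g\in\script{G}_K}\sum_{k=1}^K Q_k^{(g)}(t)\lambda_k^{(g)} = \sum_{i=1}^{\hat N}\sum_{j=1}^{\hat N} Q_i(t)\lambda_j \, N_{ij},
\end{equation*}
where $N_{ij}$ is the number of partitions in $\script{G}_K$ that place $i$ and $j$ in the same block. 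By the transitive action of the symmetric group on $\{1,\ldots,\hat{N}\}$, $N_{ij}$ takes only two values: $N_{ii}=|\script{G}_K|$ and, for $i\neq j$, $N_{ij}=|\script{G}_K|\cdot\alpha$ where $\alpha=(\hat N/K-1)/(\hat N-1)$ is the fraction of partners of $i$ living in its block.

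Next I would verify the clean identity $1-\alpha = (1-1/K)/(1-1/\hat N) = z$, so that $\alpha = 1-z$. Splitting the diagonal and off-diagonal contributions and using $\sum_{i\neq j} Q_i\lambda_j = Q_{tot}(t)\lambda_{tot} - \sum_i Q_i(t)\lambda_i$ (where all sums effectively range over $i\leq N$ because the fictitious queues and rates vanish), I obtain
\begin{equation*}
\sum_{g,k} Q_k^{(g)}(t)\lambda_k^{(g)} = |\script{G}_K|\Bigl[z\sum_{i=1}^{N}Q_i(t)\lambda_i + (1-z)Q_{tot}(t)\lambda_{tot}\Bigr].
\end{equation*}

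Now I would apply the $f$-balanced bound $\lambda_i\leq \lambda_{tot}/\hat N + \beta(1-\rho)/K$ to the first sum, producing $\sum_i Q_i(t)\lambda_i \leq Q_{tot}(t)[\lambda_{tot}/\hat N + \beta(1-\rho)/K]$. Plugging this in leaves me to verify the purely algebraic identity
\begin{equation*}
z\cdot\frac{\lambda_{tot}}{\hat N} + (1-z)\lambda_{tot} = \frac{\lambda_{tot}}{K},
\end{equation*}
which I can check by setting $u=1/K$, $v=1/\hat N$: then $z=(1-u)/(1-v)$, $1-z=(u-v)/(1-v)$, and direct simplification gives $zv+(1-z)=u$. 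Combining with the surviving $z\beta(1-\rho)/K$ term yields the stated bound. The only mild subtlety to watch is the bookkeeping for the $\hat N-N$ fictitious dimensions, but since both $\lambda_i$ and $Q_i(t)$ vanish there, the extra indices contribute nothing to either sum, so the combinatorial count $\alpha$ (computed in dimension $\hat N$) still applies and the $f$-balance assumption (hypothesized only for $i\leq N$) is enough. The main obstacle is really just identifying the count $\alpha$ and verifying the algebraic identity cleanly; once those are in hand the rest is bookkeeping.
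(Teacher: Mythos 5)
Your proposal is correct and follows essentially the same route as the paper: the pair-counting coefficient $\alpha = (\hat{N}/K-1)/(\hat{N}-1)$ you identify is exactly the paper's $(m-1)/(\hat{N}-1)$ from its Appendix~B counting argument, and your identity $z\cdot\frac{1}{\hat{N}} + (1-z) = \frac{1}{K}$ is precisely the paper's identity (\ref{eq:identity}). The handling of the fictitious coordinates and the application of the $f$-balance bound also match the paper's proof.
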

\begin{proof}
The proof of Lemma \ref{lem:group-lambda} follows from simple counting arguments, 
and is given in Appendix C. 
\end{proof}

Note that $\lambda_{tot}/\hat{N} \leq \lambda_{tot}/N$ with approximate equality 
when $N$ is large (so that $\hat{N}/N \approx 1$). 
The constraints (\ref{eq:f-balance1}) imply that $\bv{\lambda}$ is $f$-balanced with $f = \beta(1-\rho)/K$.

\begin{lem} \label{lem:group-mu} There exists a single randomized
strategy that observes queue backlogs and channel states for slot $t$ and chooses  
$\bv{\mu}^*(t) \in \feasible(t)$ such that: 
\begin{eqnarray*}
\sum_{g \in \script{G}_K} \sum_{k=1}^K Q_k^{(g)}(t)\expect{\tilde{\mu}_k^{(g)*}(t)\left|\right.\bv{Q}(t)}
\geq Q_{tot}(t)\left|\script{G}_K\right|\frac{r_K}{K}
\end{eqnarray*}
where $r_K = 1 - (1-p_{min})^K$. 
\end{lem}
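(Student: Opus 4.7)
The plan is to exhibit a specific stationary randomized strategy $\bv{\mu}^*(t)$, depending only on $\bv{S}(t)$, and verify the bound by direct computation. The strategy uses a \emph{channel-thinning} trick: pick a $K$-element subset $T$ uniformly at random from the real links $\{1,\ldots,N\}$; for each $i \in T$ with $S_i(t) = 1$, independently declare $i$ ``accepted'' with probability $p_{min}/p_i \leq 1$; if any link in $T$ is accepted, serve one of them uniformly at random, otherwise do not serve. The thinning makes every link in $T$ accepted with the same effective probability $p_{min}$, and these events are independent across $T$, so
\begin{equation*}
\expect{\tilde{\mu}_{tot}^*(t)\mid\bv{Q}(t)} = 1-(1-p_{min})^K = r_K.
\end{equation*}
Combining symmetry of $T$ with the Bernoulli-sum identity $\expect{1/(1+\mathrm{Bin}(K-1,p_{min}))} = r_K/(Kp_{min})$, every real link $i$ satisfies $\expect{\tilde{\mu}_i^*(t)\mid\bv{Q}(t)} = (K/N)\cdot p_{min}\cdot r_K/(Kp_{min}) = r_K/N$.

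Next I rewrite the left-hand side of the claim by expanding the group sums and swapping the order of summation. Using $Q_k^{(g)}(t)=\sum_{j\in\script{L}_k^{(g)}}Q_j(t)$ and $\tilde{\mu}_k^{(g)*}(t)=\sum_{i\in\script{L}_k^{(g)}}\tilde{\mu}_i^*(t)$,
\begin{equation*}
\sum_{g\in\script{G}_K}\sum_{k=1}^{K}Q_k^{(g)}(t)\tilde{\mu}_k^{(g)*}(t) = \sum_{i=1}^{\hat{N}} \tilde{\mu}_i^*(t) \sum_{g\in\script{G}_K}Q_{g(i)}^{(g)}(t),
\end{equation*}
where $g(i)$ denotes the index of the group in $g$ containing link $i$. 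A double-counting argument then shows that for any two distinct links $i,j$, the fraction of partitions $g$ placing them in the same group is $(\hat{N}/K-1)/(\hat{N}-1)$; hence
\begin{equation*}
\sum_{g\in\script{G}_K} Q_{g(i)}^{(g)}(t) = |\script{G}_K|\left[\frac{\hat{N}(K-1)}{K(\hat{N}-1)}Q_i(t) + \frac{\hat{N}-K}{K(\hat{N}-1)}Q_{tot}(t)\right].
\end{equation*}

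The last step is to take conditional expectations and substitute the service probabilities from step one. Since fictitious queues contribute zero (as $Q_i\equiv 0$ for $i>N$), summing over only the $N$ real links gives
\begin{equation*}
\expect{\sum_{g,k} Q_k^{(g)}(t)\tilde{\mu}_k^{(g)*}(t)\mid\bv{Q}(t)} = |\script{G}_K|Q_{tot}(t)r_K\cdot\frac{\hat{N}(K-1)+N(\hat{N}-K)}{NK(\hat{N}-1)}.
\end{equation*}
Verifying that this exceeds $|\script{G}_K|Q_{tot}(t) r_K/K$ reduces algebraically to the elementary inequality $(K-1)(\hat{N}-N)\geq 0$, which is immediate. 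The main obstacle is the thinning construction itself: without equalizing effective ON probabilities to $p_{min}$, the expectation $\expect{1/m\mid S_i=1}$ (with $m$ the ON count in $T$) depends on the specific $p_j$'s inside $T$ in an unwieldy, non-symmetric way, and no clean closed form is available; the thinning reduces every link in $T$ to an exchangeable Bernoulli$(p_{min})$ variable so the combinatorial symmetry needed to extract the factor $r_K/K$ becomes transparent.
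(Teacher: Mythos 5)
There is a genuine gap: your policy ignores queue occupancy, but the quantity the lemma must bound involves the \emph{true departures} $\tilde{\mu}_i^*(t)=\min[Q_i(t),\mu_i^*(t)]$ (the paper explicitly forms $\tilde{\mu}_k^{(g)*}$ by summing only components with non-empty queues). Under your scheme a link $i$ with $Q_i(t)=0$ is still selected with probability $r_K/N$, yet contributes $\expect{\tilde{\mu}_i^*(t)\mid\bv{Q}(t)}=0$; your final display treats every real link as contributing $r_K/N$, so it is an equality only when all $N$ queues are non-empty and an overestimate otherwise. You also cannot sidestep this by replacing $\tilde{\mu}^*$ with $\mu^*$ in the drift inequality (\ref{eq:group-drift-final}): the work-conserving part of the Appendix B argument shows LCQ dominates alternatives measured in true departures, not in nominal rates (an alternative could ``serve'' an empty connected queue and inflate $\sum_i\mu_i^*$). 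The failure is quantitative, not cosmetic: if exactly one queue is non-empty, so $Q_1(t)=Q_{tot}(t)$, then $\sum_{g}Q^{(g)}_{g(1)}(t)=\left|\script{G}_K\right|Q_{tot}(t)$ and your policy yields a left-hand side of $\frac{r_K}{N}\left|\script{G}_K\right|Q_{tot}(t)$, strictly below the required $\frac{r_K}{K}\left|\script{G}_K\right|Q_{tot}(t)$ whenever $K<N$, which is precisely the regime of interest.

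The paper's construction differs in exactly the way needed: after the same $p_{min}/p_i$ thinning, it serves uniformly at random among the thinned ON links \emph{restricted to the non-empty queues}, so with $l=L(t)$ non-empty queues each of them gets $\expect{\tilde{\mu}_i^*(t)\mid\bv{Q}(t)}=r_l/l$. The same partition-counting identity you derived (which is correct, as is your Binomial identity $\expect{1/(1+\mathrm{Bin}(K-1,p_{min}))}=r_K/(Kp_{min})$) then gives a factor $\frac{r_l}{l}\left[1-\frac{m-1}{\hat{N}-1}+\frac{l(m-1)}{\hat{N}-1}\right]$, and the proof splits into cases: for $l\geq K$ it uses $r_l\geq r_K$, and for $l<K$ it uses the monotonicity $r_l/l\geq r_K/K$ from Lemma \ref{lem:mu-sym}. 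To repair your argument you would have to condition the serving set on the set of non-empty queues and handle the small-$l$ case separately in this way; as written, the key step fails.
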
 
\begin{proof}
See Appendix C.
\end{proof}

 Using Lemmas \ref{lem:group-lambda} and \ref{lem:group-mu}
in the drift inequality (\ref{eq:group-drift-final}) yields: 

\begin{lem} \label{lem:penultimate} 
If $\bv{\lambda} \in \rho \Lambda$ (for $0 < \rho < 1$) and if (\ref{eq:f-balance1}) 
is satisfied for all $i \in \{1, \ldots, N\}$, then: 
\begin{eqnarray}
\Delta(\bv{Q}(t)) &\leq& \expect{C(t)\left|\right.\bv{Q}(t)} \nonumber \\
 && \hspace{-.1in} - Q_{tot}(t)\left|\script{G}_K\right| \frac{[r_K - \lambda_{tot} - z\beta(1-\rho)]}{K} 
 \label{eq:onoff2-drift-last}
\end{eqnarray}
\end{lem}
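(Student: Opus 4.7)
The plan is to take the drift bound (\ref{eq:group-drift-final}) that has already been established for an arbitrary feasible comparison policy $\bv{\mu}^*(t) \in \feasible(t)$, specialize $\bv{\mu}^*(t)$ to be the randomized strategy guaranteed by Lemma \ref{lem:group-mu}, and then bound the resulting service and arrival sums separately via Lemmas \ref{lem:group-mu} and \ref{lem:group-lambda} respectively. Because $\lambda_k^{(g)}$ is deterministic and $Q_k^{(g)}(t)$ is known given $\bv{Q}(t)$, I can split the sum in the second term of (\ref{eq:group-drift-final}) by linearity of conditional expectation into a ``service'' part and an ``arrival'' part:
\begin{eqnarray*}
\Delta(\bv{Q}(t)) &\leq& \expect{C(t)\left|\right.\bv{Q}(t)} - \sum_{g\in\script{G}_K}\sum_{k=1}^K Q_k^{(g)}(t)\expect{\tilde{\mu}_k^{(g)*}(t)\left|\right.\bv{Q}(t)} \\
&& +\; \sum_{g\in\script{G}_K}\sum_{k=1}^K Q_k^{(g)}(t)\lambda_k^{(g)}.
\end{eqnarray*}

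Next I would apply Lemma \ref{lem:group-mu} to pick the single randomized $\bv{\mu}^*(t)$ that drives the service sum up to at least $Q_{tot}(t)|\script{G}_K|r_K/K$; this is legal because (\ref{eq:group-drift-final}) holds for any feasible $\bv{\mu}^*(t) \in \feasible(t)$, in particular for a random choice that depends on $\bv{Q}(t)$ and $\bv{S}(t)$ (the underlying max-weight property of Lemma \ref{lem:max-weight-group} holds pointwise in the channel realization, so taking conditional expectations preserves the inequality). For the arrival sum, the hypothesis (\ref{eq:f-balance1}) of the lemma is exactly the $f$-balanced condition required by Lemma \ref{lem:group-lambda}, so that lemma directly yields the upper bound $Q_{tot}(t)|\script{G}_K|[\lambda_{tot}+z\beta(1-\rho)]/K$.

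Substituting the lower bound on the service sum (carrying the minus sign) and the upper bound on the arrival sum into the split drift expression and combining the two resulting $Q_{tot}(t)|\script{G}_K|/K$ terms produces the coefficient $-[r_K-\lambda_{tot}-z\beta(1-\rho)]$, giving exactly (\ref{eq:onoff2-drift-last}). The hypothesis $\bv{\lambda}\in\rho\Lambda$ is not actually invoked in the algebra itself; it is present to make the resulting coefficient meaningful (i.e., to enable a subsequent choice of $\beta$ that keeps it positive, so that Lemma \ref{lem:lyap-drift} can eventually be applied).

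The only genuinely delicate point is the compatibility of the two lemmas: the service-side bound from Lemma \ref{lem:group-mu} is stated for a \emph{single} randomized rate vector $\bv{\mu}^*(t)\in\feasible(t)$ (whose contribution to every partition $g$ is obtained by aggregating the same coordinates), while the arrival-side bound from Lemma \ref{lem:group-lambda} is purely about the deterministic vector $\bv{\lambda}$ and the combinatorics of $\script{G}_K$, so the two can be combined without conflict. This consistency, already built into the statements of the earlier lemmas, is what makes the drift inequality collapse cleanly; beyond that, no further calculation is required.
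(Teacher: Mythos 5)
Your proposal is correct and follows essentially the same route as the paper, which obtains Lemma \ref{lem:penultimate} precisely by substituting the bounds of Lemmas \ref{lem:group-lambda} and \ref{lem:group-mu} into the drift inequality (\ref{eq:group-drift-final}) and combining the two $Q_{tot}(t)\left|\script{G}_K\right|/K$ terms. Your added remarks---that the randomized comparison policy is a legitimate choice of $\bv{\mu}^*(t)$ in (\ref{eq:group-drift-final}) and that the hypothesis $\bv{\lambda}\in\rho\Lambda$ plays no role in the algebra itself---are accurate and consistent with how the paper uses the result.
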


Lemma \ref{lem:penultimate} leads immediately to the delay theorem stated in the next
subsection. 

\subsection{An Improved Delay Bound for $f$-Balanced Traffic} 

\begin{thm} \label{thm:onoff2} (Delay Bound for ON/OFF Channels with $f$-Balanced
Traffic)  
Suppose $\bv{\lambda} \in \rho\Lambda$
for $0 < \rho < 1$.  Let $K$ be the smallest integer that satisfies $r_K \geq (1+\rho)/2$, that is: 
\begin{equation} \label{eq:K-onoff2} 
[1 - (1-p_{min})^K] \geq (1+\rho)/2 
\end{equation} 
Suppose that $K \leq N$, and 
the $f$-balanced traffic constraints (\ref{eq:f-balance1}) are satisfied for some
value $\beta$ such that $0 \leq \beta < 1/(2z)$, where $z \defequiv (1 - 1/K)/(1 - 1/\hat{N})$ (note that $z \leq 1$).  
If the max-weight (LCQ) policy is used on this ON/OFF channel model, then average queue occupancy
satisfies: 
\begin{eqnarray}
\overline{Q}_{tot} \leq \frac{KD}{(1-\rho)(\frac{1}{2} - z\beta)} \: \: \: , \: \: \: \overline{W} \leq  \frac{K(D/\lambda_{tot})}{(1-\rho)(\frac{1}{2} - z\beta)} \label{eq:onoff2} 
\end{eqnarray}
where $D$ is defined: 
\begin{eqnarray*}
D \defequiv \frac{1}{2} \left[  \lambda_{tot} + \expect{A_{tot}(t)^2}   \right] 
\end{eqnarray*}
Further,  
in the special case when $N$ is a multiple of $K$, and when traffic is uniform and 
Poisson 
with  $\lambda_i = \lambda_{tot}/N$ for all $i$, we have $\beta = 0$ and:\footnote{These  bounds for
symmetric Poisson traffic are obtained from the last line of the proof of Theorem \ref{thm:onoff2}, 
which gives a slightly
smaller bound than that achieved by plugging $\beta = 0$, $\expect{A_{tot}(t)^2} = \lambda_{tot} + \lambda_{tot}^2$
into (\ref{eq:onoff2}).}
\[ \overline{Q}_{tot} \leq   \frac{[2K\lambda_{tot}   -  \lambda_{tot}^2]}{1-\rho} \:  \: \: , \: \:  \: 
\overline{W} \leq \frac{2K -  \lambda_{tot}}{1-\rho} \]
\end{thm}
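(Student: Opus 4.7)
\medskip
\noindent\textbf{Proof Proposal for Theorem \ref{thm:onoff2}.}

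The plan is to apply the Lyapunov Drift Lemma (Lemma \ref{lem:lyap-drift}) directly to the drift inequality (\ref{eq:onoff2-drift-last}) from Lemma \ref{lem:penultimate}, using $f(t) = Q_{tot}(t)$. The first step is to verify that the coefficient multiplying $Q_{tot}(t)$ in (\ref{eq:onoff2-drift-last}) is strictly positive. By the defining choice of $K$ we have $r_K \geq (1+\rho)/2$, and since $\bv{\lambda} \in \rho\Lambda$ implies $\lambda_{tot} \leq \rho$, we get $r_K - \lambda_{tot} \geq (1-\rho)/2$. Combining this with the $f$-balanced assumption via (\ref{eq:f-balance1}),
\begin{equation*}
r_K - \lambda_{tot} - z\beta(1-\rho) \;\geq\; (1-\rho)\left(\tfrac{1}{2} - z\beta\right) \;>\; 0,
\end{equation*}
where positivity follows from the hypothesis $\beta < 1/(2z)$. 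One should also check that the $f$-balance constraint (\ref{eq:f-balance1}) is implied by $\lambda_i \leq \lambda_{av} + f$ (with $f = \beta(1-\rho)/K$), using $\lambda_{tot}/\hat{N} \leq \lambda_{tot}/N = \lambda_{av}$, so that Lemma \ref{lem:penultimate} applies.

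Next I would bound the term $\overline{C} = \lim_{t \to \infty} \tfrac{1}{t}\sum_{\tau=0}^{t-1}\expect{C(\tau)}$. From (\ref{eq:c-t-onoff2}), since the cross term $-2\tilde{\mu}_k^{(g)}(t)A_k^{(g)}(t)$ is non-positive it can be dropped to produce an upper bound. For any fixed partition $g$, the remaining sums satisfy the pointwise inequalities $\sum_{k=1}^K \tilde{\mu}_k^{(g)}(t) = \tilde{\mu}_{tot}(t)$ (because each partition covers all queues once, and the fictitious queues contribute zero) and $\sum_{k=1}^K A_k^{(g)}(t)^2 \leq \bigl(\sum_{k=1}^K A_k^{(g)}(t)\bigr)^2 = A_{tot}(t)^2$ (valid since the $A_k^{(g)}(t)$ are non-negative). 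Summing over the $|\script{G}_K|$ partitions and taking expectations yields $\overline{C} \leq \tfrac{|\script{G}_K|}{2}\bigl[\lambda_{tot} + \expect{A_{tot}(t)^2}\bigr] = |\script{G}_K| D$.

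Plugging everything into Lemma \ref{lem:lyap-drift} with $\epsilon = |\script{G}_K|(1-\rho)(1/2-z\beta)/K$, the factor $|\script{G}_K|$ cancels and gives $\overline{Q}_{tot} \leq KD/[(1-\rho)(1/2 - z\beta)]$, which is exactly (\ref{eq:onoff2}). The delay bound then follows from Little's Theorem, $\overline{W} = \overline{Q}_{tot}/\lambda_{tot}$. For the symmetric Poisson special case, $\beta = 0$ holds trivially, and one can compute $\sum_{k=1}^K \expect{A_k^{(g)}(t)^2}$ exactly (each $A_k^{(g)}$ is Poisson with rate $\lambda_{tot}/K$, giving $\lambda_{tot} + \lambda_{tot}^2/K$), and the cross term $\expect{\tilde{\mu}_k^{(g)}(t)A_k^{(g)}(t)} = \expect{\tilde{\mu}_k^{(g)}(t)}\lambda_k^{(g)}$ using independence of $\tilde{\mu}(t)$ (a function of $\bv{Q}(t)$ and $\bv{S}(t)$) and $\bv{A}(t)$. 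Keeping these exact terms in $\overline{C}$ rather than discarding the negative piece yields the tighter $[2K\lambda_{tot} - \lambda_{tot}^2]/(1-\rho)$ bound.

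The main obstacle I anticipate is not the drift manipulation but the Poisson refinement: keeping the cancellation tight requires carefully tracking the equality $\sum_g \sum_k \tilde{\mu}_k^{(g)}(t)A_k^{(g)}(t) = |\script{G}_K|\cdot (\text{something close to } \lambda_{tot}\tilde{\mu}_{tot}(t)/K)$ via the counting arguments that underlie Lemma \ref{lem:group-lambda}. Everything else is a routine plug-in, since Lemmas \ref{lem:penultimate} and \ref{lem:lyap-drift} already do the heavy lifting.
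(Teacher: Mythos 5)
Your proposal follows essentially the same route as the paper: positivity of the drift coefficient from $r_K \geq (1+\rho)/2$ and $\lambda_{tot}\leq\rho$, then Lemma \ref{lem:lyap-drift} applied to (\ref{eq:onoff2-drift-last}) with the $\left|\script{G}_K\right|$ factor cancelling to give $\overline{Q}_{tot} \leq K\overline{C}/[\left|\script{G}_K\right|(1-\rho)(\frac{1}{2}-z\beta)]$, the bound $\overline{C}\leq \left|\script{G}_K\right| D$, Little's theorem, and the exact Poisson computation of $\overline{C}$ (keeping the cross term via independence of $\tilde{\mu}_k^{(g)}(t)$ and $A_k^{(g)}(t)$ plus stability) for the refined bound --- the paper does the same, merely evaluating the cross term exactly as $-2(\lambda_k^{(g)})^2$ before bounding rather than dropping it pointwise. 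One small aside in your write-up is backwards --- (\ref{eq:f-balance1}) implies $f$-balance with $f=\beta(1-\rho)/K$, not the converse, since $\lambda_{tot}/\hat{N}\leq\lambda_{av}$ --- but the theorem assumes (\ref{eq:f-balance1}) directly, so that check is unnecessary and your proof stands.
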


Note that the constraint (\ref{eq:K-onoff2}) is satisfied by: 
\[ K = \left\lceil \frac{\log(\frac{2}{1-\rho})}{\log(1/(1-p_{min}))}  \right\rceil \]
Therefore, $K$ is independent of $N$, and is proportional to $\log(\frac{1}{1-\rho})$. Assuming that 
traffic streams are independent, so that 
$\expect{A_{tot}(t)^2} = O(1)$,  implies that $D = O(1)$.  Thus  the delay bound
gives $\overline{W} \leq c\frac{\log(1/(1-\rho))}{1-\rho}$ (where $c$ is a constant independent of $\rho$ and 
$N$),  being independent of the 
network size $N$ and having an asymptotic in $\rho$ that is better than that of 
Theorem \ref{thm:onoff1}.  

\begin{proof} (Theorem \ref{thm:onoff2}) 
Because $\bv{\lambda} \in \rho\Lambda$, we have $\lambda_{tot} \leq \rho$ (as the maximum
sum rate is at most $r_N \leq 1$).  The assumption on $r_K$ in (\ref{eq:K-onoff2}) thus implies: 
\[ [r_K - \lambda_{tot} - z\beta(1-\rho)] \geq (1-\rho)(\frac{1}{2} - z\beta) \]
The above value is strictly positive because $z\beta < 1/2$.  
Using the drift inequality (\ref{eq:onoff2-drift-last}) directly in the  
Lyapunov Drift Lemma  (Lemma \ref{lem:lyap-drift}) yields:
\begin{eqnarray*}
\overline{Q}_{tot} \leq \frac{K\overline{C}}{\left|\script{G}_K\right|(1-\rho)(\frac{1}{2} - z\beta)}
\end{eqnarray*}
Using the definition of $C(t)$ in (\ref{eq:c-t-onoff2}) and the fact that the system
is stable (so the long term departure rate is equal to $\lambda_{tot}$) yields: 
\begin{eqnarray*}
\overline{C} &=& \frac{\left|\script{G}_K\right|\lambda_{tot}}{2} + 
\frac{1}{2} \sum_{g \in \script{G}_K}\sum_{k=1}^K \left[\expect{A_k^{(g)}(t)^2} - 2(\lambda_k^{(g)})^2\right] \\
&\leq& \left|\script{G}_K\right| \left[ \frac{\lambda_{tot}}{2} + \frac{\expect{A_{tot}(t)^2}}{2}     \right]  = \left|\script{G}_K\right|D
\end{eqnarray*}
The above bound on $\overline{C}$ proves the first part of the theorem.  The second part, for
uniform Poisson traffic, follows by the above equality for $\overline{C}$ (without the bound), 
using $\expect{A_k^{(g)}(t)} = \frac{\lambda_{tot}}{K}$ and $\expect{A_k^{(g)}(t)^2} = 
\frac{\lambda_{tot}^2}{K^2} + \frac{\lambda_{tot}}{K}$ for all $g$, $k$. 
\end{proof}

\section{Multi-Rate Transmission Models} \label{section:multi-rate} 

Now suppose that for each channel $i \in \{1, \ldots, N\}$, the 
states $S_i(t)$ are non-negative integers bounded by a finite integer 
$\mu_{i, max}$, where  $\mu_{i, max}$ represents the maximum transmission
rate over channel $i$.\footnote{For consistency, 
we continue to work in integer units of packets. The analysis
does not significantly change if $S_i(t)$ values are viewed as non-negative 
real numbers with units of bits/slot.}  That is, we have: 
\[ S_i(t) \in \{0, 1, \ldots,  \mu_{i, max}\} \: \: \mbox{ for all $t$ and all $i \in \{1, \ldots, N\}$} \]
We assume that $\mu_{i,max} > 0$ for all $i$.  
The queueing dynamics are governed by (\ref{eq:dynamics}).   The capacity region $\Lambda$ is known
to be equal to the set of all rate vectors that can be achieved via a stationary, randomized, queue-independent
algorithm that chooses $\bv{\mu}^*(t)$  as a potentially random function of only the current $\bv{S}(t)$ 
vector \cite{now}.  

The max-weight algorithm in this case is the algorithm that observes queue backlogs and channel states
every slot and selects the link $i \in \{1, \ldots, N\}$ with the largest value of $Q_i(t)S_i(t)$ (breaking
ties arbitrarily). 
Suppose the arrival rate vector satisfies $\bv{\lambda} \in \rho\Lambda$ for some
loading value $\rho$ such that $0 < \rho < 1$.  
The analysis in 
\cite{neely-downlink-ton} \cite{now}  uses a standard Lyapunov function, given by the sum of the squares
of queue backlog, to show the max-weight algorithm for a general downlink has average delay
upper bounded by $cN/(1-\rho)$, where $c$ is a constant that is independent of $N$ and $\rho$.
We first present a modified version of that prior bound, which has the same structure but
uses our particular $\mu_{i,max}$ notation and improves the $c$ coefficient:   
\begin{lem} \label{lem:multi1} Suppose $\bv{A}(t)$ is i.i.d. over slots with $\expect{\bv{A}(t)} = \bv{\lambda}$, 
and that the channel state vector  $\bv{S}(t) = (S_1(t), \ldots, S_N(t))$ is  
also i.i.d. over slots.  Suppose that $\bv{\lambda} \in \rho \Lambda$ for some value $\rho$ that satisfies
$0 < \rho < 1$.  Then the system is stable under the max-weight algorithm and has 
an average delay bound given by:
\begin{eqnarray}
\overline{W} &\leq&   \frac{N\left[\frac{1}{2\lambda_{tot}}\sum_{i=1}^N \expect{A_i^2} - \frac{3}{2\lambda_{tot}}\sum_{i=1}^N \lambda_i^2\right] }{(1-\rho)\mu_{sym}}  \nonumber \\
&& + \frac{N\min\left[   \sum_{i=1}^N \frac{\lambda_i \mu_{i,max}}{\lambda_{tot}} , \frac{\hat{S}^2}{\lambda_{tot}}  \right]}{(1-\rho)\mu_{sym}}  \label{eq:original-multi} 
\end{eqnarray} 
where $\hat{S}^2$ is defined: 
\[ \hat{S}^2 \defequiv \expect{\max_{i\in\{1, \ldots, N\}} S_i(t)^2} \]
and where $\mu_{sym}$ is defined as the largest value such that $(\mu_{sym}/N, \mu_{sym}/N, \ldots, \mu_{sym}/N) \in \Lambda$.  

Further, in the case when all processes $S_i(t)$ are independent and satisfy $Pr[S_i(t) = \mu_{i,max}] >0$, we can bound
$\mu_{sym}$ as follows: 
\[ \mu_{sym} \geq \hat{\mu}(1-(1-p_{min})^N) \]
where  $\hat{\mu}$ and $p_{min}$ are defined: 
\begin{eqnarray*}
\hat{\mu} &\defequiv& \min_{i \in \{1, \ldots, N\}} \mu_{i,max} \\
p_{min} &\defequiv& \min_{i\in\{1, \ldots, N\}} Pr[S_i(t) \geq \hat{\mu}]
\end{eqnarray*}
\end{lem}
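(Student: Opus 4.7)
The plan is to follow the same Lyapunov-drift-plus-max-weight template used for Theorem \ref{thm:onoff1}, but with the simpler standard Lyapunov function $L(\bv{Q}(t)) = \frac{1}{2}\sum_{i=1}^N Q_i(t)^2$ (that is, no queue-grouping term, corresponding to $\theta=0$). Squaring the exact queue recursion (\ref{eq:dynamics-tilde}) and taking conditional expectations yields $\Delta(\bv{Q}(t)) = \expect{B(t)|\bv{Q}(t)} - \sum_i Q_i(t)\expect{\tilde{\mu}_i(t)-\lambda_i|\bv{Q}(t)}$, where $B(t)$ collects the second-moment terms $\frac{1}{2}\sum_i[\tilde{\mu}_i(t)^2 + A_i(t)^2 - 2\tilde{\mu}_i(t)A_i(t)]$ exactly as in Lemma \ref{lem:drift1}. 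Since the max-weight policy maximizes $\sum_i Q_i(t)\mu_i(t)$ over $\feasible(t)$, a standard argument lets me replace $\tilde{\mu}_i(t)$ in the negative-drift term by the corresponding quantities from any alternative feasible stationary randomized policy $\bv{\mu}^*(t)$, after absorbing the truncation slack $\mu_i(t)-\tilde{\mu}_i(t)$ (which is nonzero only when $Q_i<\mu_{i,max}$) into $B(t)$.

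For the alternative policy I would use the defining property of $\mu_{sym}$: since $(\mu_{sym}/N)\bv{1}\in\Lambda$ and $\bv{\lambda}/\rho\in\Lambda$, convexity of $\Lambda$ gives $\bv{\lambda}+(1-\rho)(\mu_{sym}/N)\bv{1}\in\Lambda$. Theorem \ref{thm:capacity} then furnishes a stationary randomized $\bv{\mu}^*(t)$ with $\expect{\mu_i^*(t)}=\lambda_i+(1-\rho)\mu_{sym}/N$ for every $i$, so the drift bound simplifies to
\[\Delta(\bv{Q}(t))\leq \expect{B(t)|\bv{Q}(t)} - \frac{(1-\rho)\mu_{sym}}{N}Q_{tot}(t).\]
Applying the Lyapunov Drift Lemma (Lemma \ref{lem:lyap-drift}) produces $\overline{Q}_{tot}\leq N\overline{B}/[(1-\rho)\mu_{sym}]$, and Little's theorem converts this into $\overline{W}=\overline{Q}_{tot}/\lambda_{tot}$, delivering the $N/[(1-\rho)\mu_{sym}]$ prefactor of (\ref{eq:original-multi}).

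The technical heart is bounding $\overline{B}$ tightly so that both alternatives inside the $\min[\cdot,\cdot]$ emerge. The cross term $\overline{\sum_i \tilde{\mu}_i A_i}$ equals $\sum_i \lambda_i^2$ in steady state, using that $A_i(t)$ is independent of $\bv{S}(t)$ (and hence of $\tilde{\mu}_i(t)$) given $\bv{Q}(t)$, together with the stability identity $\overline{\tilde{\mu}_i}=\lambda_i$. For $\overline{\sum_i \tilde{\mu}_i^2}$, I would exploit that at most one link $i^*(t)$ is served per slot, so $\sum_i \tilde{\mu}_i(t)^2 = \tilde{\mu}_{i^*}(t)^2$, and apply the two complementary bounds $\tilde{\mu}_{i^*}^2\leq \tilde{\mu}_{i^*}\mu_{i^*,max}$ (whose time-average is at most $\sum_i \lambda_i\mu_{i,max}$) and $\tilde{\mu}_{i^*}^2\leq S_{i^*}(t)^2\leq \max_j S_j(t)^2$ (whose expectation is $\hat{S}^2$ by independence of channels and queues). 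Retaining the tighter of the two yields the $\min$ in (\ref{eq:original-multi}); matching the precise coefficients ($1$ on the $\min$ term and $-\frac{3}{2}$ on $\sum \lambda_i^2$ rather than the $\frac{1}{2}$ and $-1$ from a naive combination) is the main bookkeeping obstacle, and should follow from carefully accounting for the truncation slack in the max-weight substitution step.

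For the second claim I would bound $\mu_{sym}$ by domination. Construct an auxiliary ON/OFF system in which each channel $i$ is independently ON with probability exactly $p_{min}$ and, when ON, transmits at the constant rate $\hat{\mu}$. Because in the original system $\Pr[S_i(t)\geq\hat{\mu}]\geq p_{min}$ and $S_i(t)\geq \hat{\mu}$ whenever it is ``ON'' in the auxiliary sense, the original capacity region contains that of the auxiliary system. Applying the Tassiulas characterization (\ref{eq:onoff-capacity}), rescaled by $\hat{\mu}$, to the symmetric vector with entries $\hat{\mu}(1-(1-p_{min})^N)/N$ verifies every subset constraint — the binding one occurs at the full set $\{1,\ldots,N\}$ because $(1-(1-p_{min})^K)/K$ is decreasing in $K$ by Lemma \ref{lem:mu-sym} — placing this vector in the auxiliary (and hence original) capacity region, and thus giving $\mu_{sym}\geq \hat{\mu}(1-(1-p_{min})^N)$.
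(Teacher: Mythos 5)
Your proposal follows essentially the same route as the paper's Appendix E: the quadratic Lyapunov function $\frac{1}{2}\sum_i Q_i(t)^2$, the max-weight comparison against the stationary randomized policy with $\expect{\mu_i^*(t)} = \lambda_i + (1-\rho)\mu_{sym}/N$ obtained from convexity of $\Lambda$, the Lyapunov drift lemma plus Little's theorem, and the same two complementary bounds (via $\mu_{i,max}\tilde{\mu}_i(t)$ and $\max_j S_j(t)^2$) producing the $\min[\cdot,\cdot]$. The one step you leave open --- the truncation-slack bookkeeping --- is closed in the paper exactly by the observation you already make: when the slack is nonzero we have $\tilde{\mu}_i(t) = Q_i(t)$, so $Q_i(t)(\mu_i(t)-\tilde{\mu}_i(t)) = \tilde{\mu}_i(t)\mu_i(t) - \tilde{\mu}_i(t)^2 \leq \min[\mu_{i,max}\tilde{\mu}_i(t), \mu_i(t)^2] - \tilde{\mu}_i(t)^2$; combining the $-\tilde{\mu}_i(t)^2$ from this bound with the $+\frac{1}{2}\tilde{\mu}_i(t)^2$ already present in $B(t)$ leaves $-\frac{1}{2}\tilde{\mu}_i(t)^2$, whose time average is at most $-\frac{1}{2}\lambda_i^2$ by Jensen, and this is precisely where the coefficients $1$ on the $\min$ term and $-\frac{3}{2}$ on $\sum_i\lambda_i^2$ come from. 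For the bound on $\mu_{sym}$, your auxiliary ON/OFF capacity-region domination argument is a valid variant of the paper's proof, which instead constructs the thinned randomized policy explicitly (each link with $S_i(t)\geq\hat{\mu}$ enters a candidate set independently with probability $p_{min}/Pr[S_i(t)\geq\hat{\mu}]$, and one candidate is served uniformly at random); both rest on the same thinning to the symmetric probability $p_{min}$, and your verification of the subset constraints of (\ref{eq:onoff-capacity}) via Lemma \ref{lem:mu-sym}, with the binding constraint at the full link set, is correct.
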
 
\begin{proof} 
The proof uses the Lyapunov function 
$L(\bv{Q}(t)) = \frac{1}{2}\sum_{i=1}^N Q_i(t)^2$, as in \cite{neely-downlink-ton} \cite{now}, but provides
a simple modification of the argument to yield a tighter bound (given in Appendix E for completeness).
\end{proof}

We note that the above delay bound holds also in the case when $\mu_{i,max} = \infty$ for some
values $i$, but when the second moment of transmission rates is finite (so that $\hat{S}^2$ is finite). 
The above delay bound has the structure $cN/(1-\rho)$, and holds even if arrival and 
channel vectors $\bv{A}(t)$ and $\bv{S}(t)$ have entries that are correlated over the different links
$i \in \{1, \ldots, N\}$.   A similar argument can be used to show stability with the same structural
delay bound $\hat{c}N/(1-\rho)$ 
for the modified max-weight policy that chooses the link $i \in \{1, \ldots, N\}$ with the 
largest $Q_i(t)\min[Q_i(t), S_i(t)]$ value. This modified policy can sometimes provide smaller
empirical average delay than the original max-weight policy, although its resulting analytical 
delay bound has a slightly worse coefficient $\hat{c} \geq c$ (this modified policy is equivalent to the original
max-weight policy in the case of ON/OFF channels with $\mu_{i,max} = 1$ for all $i$).  Similar to the ON/OFF
case, one might suspect that for this multi-rate system,  average delay 
that is independent of $N$ can be achieved when arrival and channel processes are independent
over each channel.  However, the next subsection presents 
an important example that shows this is not the case.\footnote{We note that our original
pre-print of this paper in \cite{neely-maxweight-delay-arxiv}
 incorrectly claimed that multi-rate systems also have delay that is 
 independent of $N$.   The mistake in \cite{neely-maxweight-delay-arxiv} arose
 when  plugging the equation from Lemma 10 of that paper into equation (33) of that paper.
 Plugging one equation into the other implicitly  
 assumed that  the sum  queue backlog in queues with at least $\mu_{max}$ packets
 is the same as the total queue backlog in the system.  This is true when $\mu_{max} = 1$, but is 
 not true in general as it neglects
 the ``residual'' packets in queues with fewer than $\mu_{max}$
 packets.}

\subsection{An example showing necessity of $O(N)$ delay} \label{section:example} 

Here we present an example showing that the average number of queues that have 
at least one packet but fewer than $\mu_{i,max}$ packets must be linear in $N$, which necessarily
makes the average delay of \emph{any} scheduling policy grow at least linearly with $N$. 
Consider a system with $N$ queues with symmetric channels and traffic.  Assume that $N \geq 3$ 
and suppose that all arrival processes $A_i(t)$ are independent and Bernoulli with $Pr[A_i(t) = 1] = 3/N$
for all $i \in \{1, \ldots, N\}$ (so that $\lambda_i = 3/N$ for all $i$, and $\lambda_{tot} = 3$ packets/slot). 
Now suppose that all channels have $\mu_{i,max} = 5$, and  channel 
state processes are i.i.d. with $Pr[S_i(t) = 5] = 1/2$, $Pr[S_i(t) = 0] = 1/2$ for all $i \in \{1, \ldots, N\}$. 
The largest symmetric rate in the capacity region of this system is $\mu_{sym}/N = 5(1 -(1/2)^N)/N$, 
and hence the arrival rate vector is inside the capacity region and has $\rho$ given by: 
\[ \rho = \frac{3}{5(1-(1/2)^N)} \]
Note that $\rho < 1$ for $N \geq 3$, and $\rho$ is approximately $3/5$ for large $N$. 
Here we show that under any scheduling policy, in steady state the average number of non-empty
queues in this system must be linear in $N$.   Specifically, consider any scheduling policy, and let $Z(t)$ represent
the number of non-empty queues on slot $t$.  For simplicity, we assume that $Z(t)$ has a well
defined steady state under the scheduling policy.   
The intuition behind our proof is that $Z(t+1)$ is formed from $Z(t)$ by adding the number of new
non-empty queues created and subtracting any non-empty queue that becomes empty.  
The number of non-empty queues subracted 
can be at most  1 (as we can serve at most one channel per slot), while  the average number
of new non-empty queues added  is 
\emph{more than one} whenever $Z(t) < N/2$. 

\begin{lem} Consider any scheduling policy for which $Z(t)$ has a well defined steady state
distribution.  Then for the system above (with $\lambda_i = 3/N$ and $\mu_{i,max} = 5$ for
all $i\in\{1, \ldots, N\}$) we have that in steady state: 
\[Pr[Z(t) \geq N/2] \geq 1/3 \]
and hence $\expect{Z(t)} \geq N/6$. That is, the average number of non-empty queues is at least $N/6$, 
and hence the average number of packets in the system is at least $N/6$. 
\end{lem}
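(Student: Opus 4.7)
The plan is to control $Z(t+1)-Z(t)$ by separating the two sources of change: empty queues that acquire their first packet (drift up) and the at-most-one served queue that gets drained (drift down). Because only one channel is served per slot, the downward drift is bounded by $1$, while the upward drift is proportional to the number of empty queues. Comparing the two in steady state forces a lower bound on $\Pr[Z(t)\geq N/2]$.

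Concretely, I would first define $U(t)$ as the number of queues with $Q_i(t)=0$ that become non-empty at time $t+1$, and $V(t)$ as the number of queues with $Q_i(t)\geq 1$ that become empty at time $t+1$. A queue with $Q_i(t)=0$ becomes non-empty iff $A_i(t)=1$ (a served empty queue stays empty), and a queue with $Q_i(t)\geq 1$ can only become empty by being the single served link on slot $t$; hence $V(t)\leq 1$ deterministically, and $U(t)=\sum_{i:\,Q_i(t)=0}A_i(t)$. By construction $Z(t+1)=Z(t)+U(t)-V(t)$.

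Next, I would use the fact that $\bv{A}(t)$ is independent of $\bv{Q}(t)$ (arrivals are i.i.d.\ across slots), so conditional on $Z(t)=z$ the variable $U(t)$ is Binomial$(N-z,3/N)$ with mean $3(N-z)/N$. In the assumed steady state, $\mathbb{E}\{Z(t+1)\}=\mathbb{E}\{Z(t)\}$, hence $\mathbb{E}\{U(t)\}=\mathbb{E}\{V(t)\}\leq 1$. On the event $\{Z(t)<N/2\}$ we have $\mathbb{E}\{U(t)\mid Z(t)\}>3/2$, while $\mathbb{E}\{U(t)\mid Z(t)\}\geq 0$ everywhere, so
\begin{equation*}
1\;\geq\;\mathbb{E}\{U(t)\}\;\geq\;\tfrac{3}{2}\,\Pr[Z(t)<N/2].
\end{equation*}
Rearranging gives $\Pr[Z(t)<N/2]\leq 2/3$, i.e.\ $\Pr[Z(t)\geq N/2]\geq 1/3$, and then $\mathbb{E}\{Z(t)\}\geq (N/2)\cdot(1/3)=N/6$ follows from Markov's inequality in reverse.

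The only subtlety, and the place I would be careful, is the assertion $V(t)\leq 1$: this uses strongly that at most one channel is scheduled per slot and that arrivals cannot empty a queue, so no non-served non-empty queue can disappear from the non-empty list in a single slot. Everything else is bookkeeping about $U(t)$ and a one-line steady-state balance; no Lyapunov machinery or properties of the scheduler beyond ``at most one link per slot'' is needed, which is precisely why the bound holds for \emph{any} scheduling algorithm.
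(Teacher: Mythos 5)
Your proof is correct and is essentially the paper's argument: a steady-state balance on the drift of $Z(t)$, using that at most one non-empty queue can empty per slot while the expected number of newly non-empty queues exceeds $3/2$ whenever $Z(t)<N/2$, yielding the same constants $1/3$ and $N/6$. Your split into $U(t)$ and $V(t)$ with $\expect{U(t)}=\expect{V(t)}\leq 1$ is just a repackaging of the paper's conditional-drift inequalities, so no substantive difference.
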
 
\begin{proof}
Define $\Delta(t) \defequiv Z(t+1) - Z(t)$ as the change in $Z(t)$ from one slot to the next.  Let $t$ be a time
at which the  system is in steady 
state.  We thus have $\expect{\Delta(t)} = 0$.   On the other hand, we have the following: 
\begin{eqnarray}
\expect{\Delta(t) \left|\right.Z(t) \geq  N/2} &\geq&  -1 \label{eq:counter1} \\
\expect{\Delta(t) \left|\right. Z(t) < N/2} &\geq& \lambda_i N/2 - 1 = 1/2 \label{eq:counter2} 
\end{eqnarray}
where (\ref{eq:counter1}) follows because the drift cannot be less than $-1$ on any slot (as at most
one non-empty queue can become an empty queue), and (\ref{eq:counter2}) holds because, given
that $Z(t) < N/2$, the
average number of \emph{new} non-empty queues that are created on slot $t$ is equal to the average
number of new arrivals to the empty queues, which is at least $\lambda_i (N/2)$.  It follows that: 
\begin{eqnarray*}
0 &=& \expect{\Delta(t)}  \\
&=& \expect{\Delta(t)\left|\right.Z(t) \geq N/2}Pr[Z(t) \geq N/2] \\
 && + \expect{\Delta(t)\left|\right.Z(t) < N/2}(1-Pr[Z(t) \geq N/2]) \\
 &\geq& (-1)Pr[Z(t) \geq N/2] \\
 && + (1/2)(1-Pr[Z(t) \geq N/2]) 
\end{eqnarray*}
Therefore $Pr[Z(t) \geq N/2] \geq 1/3$, completing the proof.
\end{proof}

\section{Simulations} \label{section:sim}

Here we present simulations for the ON/OFF system with independent channel and arrival
processes. 
We assume that $Pr[S_i(t) = ON] = 1/2$ for all $i \in \{1, \ldots, N\}$.  We 
first consider symmetric Bernoulli arrivals, so that $\lambda_i = \lambda$ for all
$i$, where  $\lambda$ is chosen so that $\bv{\lambda} \in \rho \Lambda$ with 
 $\rho = 0.8$.  We simulated the system over 
$10^6$ slots for values of $N$ between $3$ and $300$.  The resulting simulated
queue averages are shown in Fig. \ref{fig:1}, together with the two $O(1)$ bounds 
and the previous $O(N)$ bound. 
Note that the previous $O(N)$ bound is a considerable overestimate of queue backlog.
Our new bounds do not grow with $N$, and our second $O(1)$ bound (derived for
$f$-balanced traffic rates)  is indeed tighter
than the first $O(1)$ bound (for $N\geq 9$),  although it applies only to $f$-balanced traffic
while the first bound applies to any traffic rates in $\rho \Lambda$. However,
there is still a significant gap (roughly a factor of 10 in this example) 
between our tightest bound and the simulated value.
We next consider heterogeneous traffic rates implemented on the same ON/OFF system.
We assume that $N$ is odd, and choose rates $\lambda_i$ given as follows: 

\[ \lambda_i = \left\{ \begin{array}{ll}
                          \lambda &\mbox{ for $i \in \{1, \ldots, (N-1)/2\}$} \\
                             2\lambda  & \mbox{ for $i \in \{(N-1)/2 + 1, \ldots, N-1\}$} \\
                             4\lambda & \mbox{ for $i = N$} 
                            \end{array}
                                 \right.  \]
where $\lambda$ is chosen so that $\bv{\lambda} \in \rho \Lambda$ for $\rho = 0.8$. The 
results are shown in Fig. \ref{fig:2}. Note that we plot only the first $O(1)$ bound (for heterogeneous
traffic) in this case, although the $f$-balanced traffic assumption also applies in this case when $N$ is 
sufficiently large. 

Simulations of the multi-rate system example in Section \ref{section:example} were
also conducted, and it was verified that average backlog indeed grows linearly with $N$ due to the ``residual''
packets in  queues $i$ that have  fewer than $\mu_{i,max}$ packets (simulation 
plots omitted for brevity).  
However, it was observed in the simulations that the total backlog due to queues 
with \emph{at least}  $\mu_{i,max}$ 
packets is $O(1)$.  This suggests that, although the total average backlog in 
multi-rate systems may have a fundamental  $O(N)$ term due to residual packets, 
the average backlog may be $O(1)$ after a term of at most 
$\sum_{i=1}^N (\mu_{i,max} -1)$ is subtracted out. 

\begin{figure}[htbp]
   \centering
   \includegraphics[height=2.5in, width=3in]{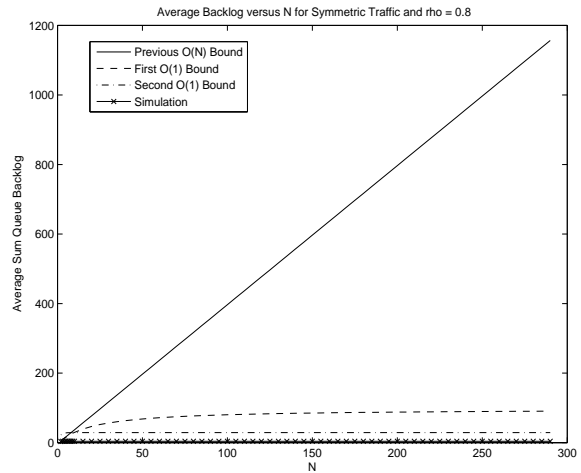} % requires the graphicx package
   \caption{Simulation and Bounds for the ON/OFF system with symmetric traffic and $\rho = 0.8$.}
   \label{fig:1}
\end{figure}

\begin{figure}[htbp]
   \centering
   \includegraphics[height=2.5in, width=3in]{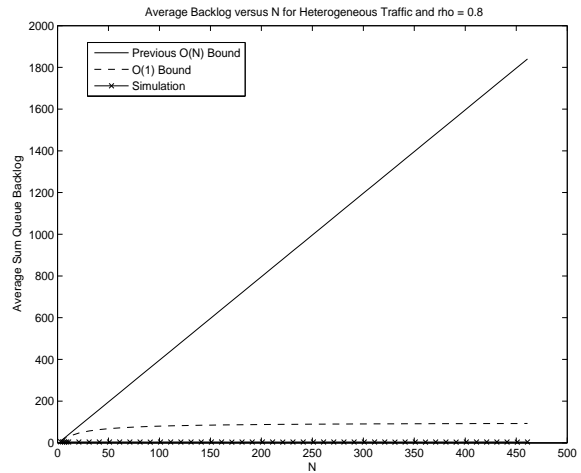} % requires the graphicx package
   \caption{Simulation and Bounds for the ON/OFF system with heterogeneous traffic and $\rho = 0.8$.}
   \label{fig:2}
\end{figure}

\section{Conclusions} 

We have presented an improved delay analysis for the max-weight scheduling 
algorithm.  For ON/OFF channels, max-weight is equivalent to Longest Connected
Queue (LCQ), and yields average delay that is order-optimal, being independent of the 
network size $N$.  If an $f$-balanced traffic assumption holds, average delay was shown
to maintain independence of $N$ while allowing an improved asymptotic in $\rho$. 
For multi-rate channels, a delay bound of $O(N)$ applies.  Conversely, it 
is shown for a simple
multi-rate example that, unlike ON/OFF channels, average backlog must be \emph{at least} linear
in $N$ due to ``residual'' packets. 
   Our delay analysis makes use
of the technique of queue grouping.  The particular
Lyapunov functions introduced for this delay analysis are powerful 
and may be useful in
other contexts. 

\section*{Appendix A --- Proof of Lemma \ref{lem:drift1}}

Here we prove Lemma \ref{lem:drift1}.  
 Define $\Delta_1(\bv{Q}(t))$ and $\Delta_2(\bv{Q}(t))$ as the conditional drift
 for the sum of squares term and the square of queue backlog term, respectively, 
 so that $\Delta(\bv{Q}(t)) = \Delta_1(\bv{Q}(t)) + \theta\Delta_2(\bv{Q}(t))$. 
 Squaring (\ref{eq:dynamics-tilde}) and using the fact that $\tilde{\mu}_i(t)^2 = \tilde{\mu}_i(t)$
and $Q_i(t)\tilde{\mu}_i(t) = Q_i(t)\mu_i(t)$ (because $\tilde{\mu}_i(t) \in \{0, 1\}$, and
$\tilde{\mu}_i(t) = \mu_i(t)$ if $Q_i(t)>0$): yields: 
 \begin{eqnarray*} 
 \frac{1}{2}Q_i(t+1)^2 &=& \frac{1}{2} Q_i(t)^2  
  + \frac{(A_i(t) - \tilde{\mu}_i(t))^2}{2} \\
  && - Q_i(t)(\mu_i(t) - A_i(t))\\
  &=& \frac{1}{2} \left[Q_i(t)^2  + A_i(t)^2 + \tilde{\mu}_i(t)\right] - A_i(t)\tilde{\mu}_i(t) \\
  && - Q_i(t)(\mu_i(t) - A_i(t)) 
 \end{eqnarray*} 
 Therefore: 
 \begin{eqnarray*}
 \Delta_1(\bv{Q}(t)) &=& \expect{B_1(t)\left|\right.\bv{Q}(t)} \\
 &&  - \sum_{i=1}^NQ_i(t)\expect{\mu_i(t)- \lambda_i\left|\right.\bv{Q}(t)} 
 \end{eqnarray*}
 where
 \[ B_1(t) \defequiv \sum_{i=1}^N\left[\frac{1}{2} \left[ A_i(t)^2 + \tilde{\mu}_i(t)\right] - A_i(t)\tilde{\mu}_i(t)\right] \] 
 
 Similarly: 
 \begin{eqnarray*}
 \frac{1}{2}Q_{tot}(t+1)^2 &=& \frac{1}{2}\left[Q_{tot}(t)^2 + \tilde{\mu}_{tot}(t)^2 + A_{tot}(t)^2\right] \\
 && - \tilde{\mu}_{tot}(t)A_{tot}(t) \\
 && - Q_{tot}(t)(\tilde{\mu}_{tot}(t) - A_{tot}(t))
 \end{eqnarray*}
 Therefore: 
 \begin{eqnarray*}
 \Delta_2(\bv{Q}(t)) &=& \expect{B_2(t)\left|\right.\bv{Q}(t)} \\
 && - Q_{tot}(t)\expect{\tilde{\mu}_{tot}(t) - \lambda_{tot}\left|\right.\bv{Q}(t)} 
 \end{eqnarray*}
 where $\tilde{\mu}_{tot}(t)^2 = \tilde{\mu}_{tot}(t)$ (because it is either $0$ or $1$), and 
 \begin{eqnarray*}
 B_2(t) \defequiv \frac{1}{2}\left[ \tilde{\mu}_{tot}(t) + A_{tot}(t)^2\right] 
  - \tilde{\mu}_{tot}(t)A_{tot}(t) \\
 \end{eqnarray*}
 Summing $\Delta_1(\bv{Q}(t))$ and $\theta\Delta_2(\bv{Q}(t))$ and noting from
 (\ref{eq:bt}) 
 that $B(t) = B_1(t) + \theta B_2(t)$ yields the result of Lemma \ref{lem:drift1}. 

\section*{Appendix B --- Proof of Lemma \ref{lem:max-weight-group}}
Define the integer $m = \hat{N}/K$. 
Here we prove Lemma \ref{lem:max-weight-group}. 
Given a particular queue backlog vector $\bv{Q}(t)$,  
the LCQ algorithm maximizes the expression $\sum_{i=1}^{\hat{N}} Q_i(t) \mu_i(t)$ over
all  $\bv{\mu}(t) \in \feasible(t)$. We now show that this \emph{also} maximizes 
the expression given in Lemma \ref{lem:max-weight-group}.  
To this end, we have: 
\begin{eqnarray*}
\sum_{g \in \script{G}_K}\sum_{k=1}^K Q_k^{(g)}(t) \tilde{\mu}_k^{(g)}(t) &=& 
\sum_{g \in \script{G}_K}\sum_{k=1}^K Q_k^{(g)}(t)\sum_{i \in \script{L}_k^{(g)}}\tilde{\mu}_i(t) \\
&&  \hspace{-.3in} = \sum_{i=1}^{\hat{N}} \tilde{\mu}_i(t) Q_i(t)\left|\script{G}_K\right|\\
&& \hspace{-.3in} + \sum_{i=1}^{\hat{N}} \tilde{\mu}_i(t) \sum_{j\neq i} Q_j(t)\frac{\left|\script{G}_K\right|(m-1)}{\hat{N}-1}  
\end{eqnarray*}
where the final equality holds because link $i$ is in every group that multiplies
the $\tilde{\mu}_i(t)$ term, and all other links multiply this term the same number of times
(by group symmetry).  The above also uses the fact that (by symmetry) 
 the number of group partitions for which 
a particular link $j$ is in the same group as link
 $i$ is equal to the total number of partitions multiplied by the probability that a randomly chosen partition includes $i$ and $j$ in the same group. 
Define the above expression as $f(t)$ for simplicity. Therefore: 
\begin{eqnarray}
f(t) &=&   \sum_{i=1}^{\hat{N}} \tilde{\mu}_i(t) Q_i(t)\left|\script{G}_K\right|(1 - \frac{m-1}{\hat{N}-1})  \nonumber \\
&& + \sum_{i=1}^{\hat{N}}\tilde{\mu}_i(t) \left(\sum_{j=1}^{\hat{N}} Q_j(t)\right)\left|\script{G}_K\right|\frac{m-1}{\hat{N}-1}  \label{eq:sum-swapping} 
\end{eqnarray}
The $\tilde{\mu}_i(t)$ values in the expression for $f(t)$ are the 
only ones affected by the control action on slot $t$. 
The final term on the right hand side is given by $\sum_i \tilde{\mu}_i(t)$ (the total departures
on slot $t$) multiplied by a non-negative constant.  
This final term is maximized by any work conserving policy that always transmits a packet when
there is a non-empty connected queue.  The first term on the right hand side is a non-negative
constant multiplied by the term $\sum_i \tilde{\mu}_i(t) Q_i(t)$.  But note that
$\tilde{\mu}_i(t) Q_i(t) = \mu_i(t) Q_i(t)$, and thus the LCQ policy maximizes this first
term.  As LCQ is work conserving, it also maximizes the second term, and thus
maximizes $f(t)$, proving Lemma \ref{lem:max-weight-group}.

\section*{Appendix C --- Proof of Lemmas \ref{lem:group-lambda} and \ref{lem:group-mu}}

%Here we prove Lemmas \ref{lem:group-lambda} and  \ref{lem:group-mu}. 

\begin{proof} (Lemma \ref{lem:group-lambda}) Define the integer
$m = \hat{N}/K$. Using a counting argument similar
to that of Appendix B (compare with (\ref{eq:sum-swapping})), 
we have that 
for any vector $\bv{\lambda} = (\lambda_1, \ldots, \lambda_{\hat{N}})$: 
\begin{eqnarray}
\sum_{g\in\script{G}_K}\sum_{k=1}^K Q_k^{(g)}(t)\lambda_k^{(g)} &=& \sum_{i=1}^{\hat{N}} Q_i(t)\lambda_i\left|\script{G}_K\right|(1-\frac{m-1}{{\hat{N}}-1}) \nonumber \\
&& + Q_{tot}(t)\lambda_{tot} \left|\script{G}_K\right|\frac{m-1}{{\hat{N}}-1} \label{eq:lambda-derivation}
\end{eqnarray}
Using the bound on $\lambda_i$ given in (\ref{eq:f-balance1}) yields: 
\begin{eqnarray*}
 \sum_{g\in\script{G}_K}\sum_{k=1}^K Q_k^{(g)}(t)\lambda_k^{(g)} \leq \hspace{+1.5in}&\\
  Q_{tot}(t)\lambda_{tot}\left|\script{G}_K\right|\left[\frac{1}{{\hat{N}}} - \frac{m-1}{{\hat{N}}({\hat{N}}-1)} + \frac{m-1}{{\hat{N}}-1}\right] & \\
 +  Q_{tot}(t)\left|\script{G}_K\right|(1-\frac{m-1}{{\hat{N}}-1})\beta(1-\rho)/K & 
 \end{eqnarray*}
 The result of Lemma \ref{lem:group-lambda} follows by  using the identities:
 \begin{equation} \label{eq:identity} 
  \left[\frac{1}{{\hat{N}}} - \frac{m-1}{{\hat{N}}({\hat{N}}-1)} + \frac{m-1}{{\hat{N}}-1}\right] = \frac{1}{K} 
  \end{equation}
  \[ \left[ 1 - \frac{m-1}{\hat{N}-1}\right] = z \]
\end{proof}

\begin{proof} (Lemma \ref{lem:group-mu}) 
Let $L(t)$ represent the number of non-empty queues on slot $t$.  
If $L(t) = 0$, then $Q_{tot}(t)= 0$ and the result is trivial.  
%We use a technique similar to our previous
%work in \cite{neely-order-optimal-ton}. 
Now suppose that $L(t) = l$, where $l \in \{1, 2, \ldots, N\}$.  Define $(l_1, \ldots, l_{\hat{N}})$ to be 
a $0/1$ vector with $l_i = 1$ if and only if $Q_i(t)>0$.   Define $l_k^{(g)}$ to be the number
of non-empty queues in the $k$th group of partition $g$.  
Consider the following randomized policy for $\bv{\mu}^*(t) \in \feasible(t)$: 
First observe all channel states $S_i(t)$ for non-empty queues $i$, and define 
\emph{new} channel states $\hat{S}_i(t)$ as follows: 
If $S_i(t) = 0$ (OFF), assign $\hat{S}_i(t) = 0$.  If 
$S_i(t) = 1$ (ON), independently assign $\hat{S}_i(t) = 1$ with probability $p_{min}/p_i$ (this is 
a valid probability because $p_{min} \leq p_i$).  It follows that the new channel state
vector $\hat{\bv{S}}(t)$ has independent and symmetric ON probabilities $p_{min}$.  
Now independently, randomly, and uniformly choose a queue to serve over all 
non-empty queues $i$ with $\hat{S}_i(t)=1$.   It follows that for all non-empty queues $i$ we have: 
\[ \expect{\tilde{\mu}_i^*(t)\left|\right.\bv{Q}(t)}  = \frac{1 - (1-p_{min})^l}{l} = \frac{r_l}{l}  \]
Further, for any $g \in \script{G}_K$ and any $k \in \{1, \ldots, K\}$ we have: 
\[ \expect{\tilde{\mu}_k^{(g)*}(t)\left|\right.\bv{Q}(t)} = \sum_{i \in \script{L}_k^{(g)}} \expect{\tilde{\mu}_i^*(t)\left|\right.\bv{Q}(t)} = l_k^{(g)}\frac{r_l}{l} \]
Using this equality gives: 
\begin{eqnarray}
\sum_{g\in\script{G}_K}\sum_{k=1}^K Q_k^{(g)}(t)\expect{\tilde{\mu}_k^{(g)*}(t)\left|\right.\bv{Q}(t)} = \nonumber  \\
\frac{r_l}{l}\sum_{g \in \script{G}_K}\sum_{k=1}^K Q_k^{(g)}(t)l_k^{(g)} \label{eq:appc1}
\end{eqnarray}
Now note that the $l_k^{(g)}$ values are structurally 
similar to the $\lambda_k^{(g)}$ values, and hence (similar to (\ref{eq:lambda-derivation})) we have
(using $Q_i(t)l_i = Q_i(t)$ and $l_{tot} =  l$): 
\begin{eqnarray*}
 \sum_{g \in \script{G}_K}\sum_{k=1}^K Q_k^{(g)}(t)l_k^{(g)} &=&  \sum_{i=1}^{\hat{N}} Q_i(t) \left|\script{G}_K\right|(1-\frac{m-1}{\hat{N}-1}) \\
 && + Q_{tot}(t) l \left|\script{G}_K\right|\frac{m-1}{\hat{N}-1} 
 \end{eqnarray*}

Using this in (\ref{eq:appc1}) yields: 
\begin{eqnarray}
\sum_{g\in\script{G}_K}\sum_{k=1}^K Q_k^{(g)}(t)\expect{\tilde{\mu}_k^{(g)*}(t)\left|\right.\bv{Q}(t)} \nonumber \\
= 
\frac{r_l}{l} Q_{tot}(t)\left|\script{G}_K\right|\left[ 1 - \frac{m-1}{\hat{N}-1} + \frac{l(m-1)}{\hat{N}-1}  \right] \label{eq:appc-reuse} \\
\geq r_lQ_{tot}(t)\left|\script{G}_K\right|\left[ \frac{1}{\hat{N}} -   \frac{(m-1)}{\hat{N}(\hat{N}-1)} + \frac{m-1}{\hat{N}-1}    \right] \nonumber  \\
= r_l Q_{tot}(t) \left|\script{G}_K\right|/K \label{eq:appc-reuse2}
\end{eqnarray}
where the last equality holds by (\ref{eq:identity}). 
The above holds for $L(t) = l \in \{1, \ldots, N\}$.   Suppose now that $l \geq K$.  In this
case we have  $r_l \geq r_K$, proving the result of  Lemma \ref{lem:group-mu} for $l \geq K$. 

Consider now the final case where  $l \in \{1, \ldots, K-1\}$. Then from (\ref{eq:appc-reuse}) we have: 
\begin{eqnarray}
\sum_{g\in\script{G}_K}\sum_{k=1}^K Q_k^{(g)}(t)\expect{\tilde{\mu}_k^{(g)*}(t)\left|\right.\bv{Q}(t)} \nonumber \\
\geq  \frac{r_l}{l} Q_{tot}(t) \left|\script{G}_K\right|
\end{eqnarray}
Using the fact that 
$\frac{r_l}{l} \geq \frac{r_{K-1}}{K-1} \geq \frac{r_K}{K}$ yields the result. 
\end{proof}

\section*{Appendix D --- Proof of Lemma \ref{lem:mu-sym}}

Here we prove that $\mu_K^{sym} > \mu_{K+1}^{sym}$.  Specifically, 
we show that if $p$ is a value such that $0< p \leq 1$, then 
for any positive integer $K$ we have: 
\begin{equation} \label{eq:appe}
\frac{1}{K}[1 - (1-p)^K] > \frac{1}{K+1}[1 - (1-p)^{K+1}]
\end{equation} 
To show this, note that it is trivially true for the case $p=1$.  In the opposite case where
$0 < p < 1$, we can multiply (\ref{eq:appe}) by $K(K+1)$ and rearrange terms
to see that the inequality is equivalent to the following: 
\begin{equation} \label{eq:appe2} 
(1-p)^K + Kp(1-p)^K < 1
\end{equation} 
Thus, it suffices to prove that (\ref{eq:appe2}) is true.  To this end, we have: 
\begin{eqnarray*}
(1-p)^K + Kp(1-p)^K  &<& (1-p)^K + Kp(1-p)^{K-1}  \\ %\label{eq:appe3} \\
&\leq& \sum_{i=0}^K {K \choose i} p^i (1-p)^{K-i} \nonumber \\
&=& ((1-p) + p)^K = 1 \nonumber 
\end{eqnarray*}
where the first (strict) inequality holds  because $0 < p < 1$ and hence 
$(1-p)^K < (1-p)^{K-1}$. 
This establishes (\ref{eq:appe2}) and completes the proof of Lemma \ref{lem:mu-sym}.

 \section*{Appendix E --- Proof of Lemma \ref{lem:multi1}}

The queueing dynamics are given by $Q_i(t+1) = Q_i(t) -\tilde{\mu}_i(t) + A_i(t)$, 
where $\tilde{\mu}_i(t) = \min[\mu_i(t), Q_i(t)]$.  Using the Lyapunov function 
$L(\bv{Q}(t)) \defequiv \frac{1}{2}\sum_{i=1}^N Q_i(t)^2$ and performing a standard quadratic
drift computation (see, for example, \cite{now}), it is not difficult to show the 
drift satisfies: 
 \begin{eqnarray*}
\Delta(\bv{Q}(t)) &=&  \frac{1}{2}\sum_{i=1}^N \expect{A_i(t)^2}  \\
&&  -  \sum_{i=1}^N \expect{\lambda_i \tilde{\mu}_i(t) - \frac{\tilde{\mu}_i(t)^2}{2}\left|\right.\bv{Q}(t)} \\
&& + \sum_{i=1}^N \lambda_i Q_i(t) 
 -   \sum_{i=1}^N Q_i(t)\expect{\mu_i(t)\left|\right.\bv{Q}(t)}  \\
 && + \sum_{i=1}^N Q_i(t)\expect{\mu_i(t) - \tilde{\mu}_i(t)\left|\right.\bv{Q}(t)} 
 \end{eqnarray*}
  By definition of $\tilde{\mu}_i(t)$, we have: 
  \begin{eqnarray*} 
  Q_i(t)(\mu_i(t) - \tilde{\mu}_i(t)) &=& \tilde{\mu}_i(t)\mu_i(t) - \tilde{\mu}_i(t)^2 \\
  &\leq& \min[\mu_{i, max}\tilde{\mu}_i(t), \mu_i(t)^2] - \tilde{\mu}_i(t)^2
  \end{eqnarray*}   
    Hence: 
 \begin{eqnarray}
\Delta(\bv{Q}(t)) &\leq&  \frac{1}{2}\sum_{i=1}^N \expect{A_i(t)^2} \nonumber \\
&& \hspace{-.3in} -  \sum_{i=1}^N \expect{\lambda_i \tilde{\mu}_i(t) +\frac{\tilde{\mu}_i(t)^2}{2}\left|\right.\bv{Q}(t)} \nonumber \\
&& \hspace{-.3in} + \sum_{i=1}^N \lambda_i Q_i(t) 
 -   \sum_{i=1}^N Q_i(t)\expect{\mu_i(t)\left|\right.\bv{Q}(t)}  \nonumber \\
 && \hspace{-.3in}  + \sum_{i=1}^N \expect{\min\left[\mu_{i, max}\tilde{\mu}_i(t) , \mu_i(t)^2\right] \left|\right.\bv{Q}(t)}  \label{eq:appg2}  
 \end{eqnarray}
 Using the fact that $\expect{\min[\cdot, \cdot]} \leq \min[\expect{\cdot}, \expect{\cdot}]$ (by Jensen's inequality and concavity of the $\min[\cdot, \cdot]$ operator), that the sum of a $\min$ is less than or equal to the $\min$
 of a sum, and that $\sum_{i=1}^N \expect{\mu_i(t)^2\left|\right.\bv{Q}(t)} \leq \hat{S}^2$, 
 the final term on the right hand side of (\ref{eq:appg2}) 
 can be bounded by: 
 \begin{eqnarray*}
 \min\left[ \sum_{i=1}^N \mu_{i, max}\expect{\tilde{\mu}_i(t)\left|\right.\bv{Q}(t)}   ,  \hat{S}^2 \right] 
 \end{eqnarray*}
Because the max-weight policy maximizes $\sum_{i=1}^N Q_i(t)\mu_i(t)$ (given queue
backlogs $\bv{Q}(t)$)
we have: 
\begin{equation} \label{eq:app1} 
 \sum_{i=1}^NQ_i(t)\expect{\mu_i(t)\left|\right.\bv{Q}(t)} \geq \sum_{i=1}^N Q_i(t)\expect{\mu_i^*(t)\left|\right.\bv{Q}(t)} 
 \end{equation} 
where $\mu_i^*(t)$ represents any alternative scheduling decision.  Noting that $\bv{\lambda}/\rho \in \Lambda$
and $\bv{\mu}_{sym}/N \in \Lambda$, we have by convexity of $\Lambda$: 
\[ \bv{\lambda} + (1-\rho) \bv{\mu}_{sym}/N \in \Lambda \]
 Thus, there exists a stationary randomized policy that chooses $\bv{\mu}^*(t)$ independent of queue backlog
 to yield: 
 \[ \expect{\bv{\mu}^*(t) \left|\right.\bv{Q}(t)} = \expect{\bv{\mu}^*(t)} = \bv{\lambda} + (1-\rho)\bv{\mu}_{sym}/N \]
 Plugging this into (\ref{eq:app1}) and then into (\ref{eq:appg2}) yields: 
 \begin{eqnarray}
\hspace{-.2in}\Delta(\bv{Q}(t)) &\leq&  \frac{1}{2}\sum_{i=1}^N \expect{A_i(t)^2} \nonumber \\
&&  -  \sum_{i=1}^N \expect{\lambda_i \tilde{\mu}_i(t) +\frac{\tilde{\mu}_i(t)^2}{2}\left|\right.\bv{Q}(t)} \nonumber \\
&& -  \frac{(1-\rho)\mu_{sym}}{N} \sum_{i=1}^N Q_i(t) \nonumber \\
 && + \min\left[\sum_{i=1}^N \mu_{i, max}\expect{\tilde{\mu}_i(t) \left|\right.\bv{Q}(t)}, \hat{S}^2\right] \label{eq:appg3}  
 \end{eqnarray}
 Using the Lyapunov drift lemma (Lemma \ref{lem:lyap-drift}) on the above drift and noting that the 
 system is stable with well defined time average limits yields: 
 \begin{eqnarray*} 
 \sum_{i=1}^N \overline{Q}_i &\leq&   \frac{N\left[\frac{1}{2}\sum_{i=1}^N \expect{A_i^2} - \frac{3}{2}\sum_{i=1}^N \lambda_i^2  \right]}{(1-\rho)\mu_{sym}} \\
 && + \frac{N\min\left[ \sum_{i=1}^N \lambda_i\mu_{i,max}, \hat{S}^2 \right]}{(1-\rho)\mu_{sym}}
 \end{eqnarray*}
 where we have used the fact that $\lim_{t\rightarrow\infty}\expect{\tilde{\mu}_i(t)} = \lambda_i$ and 
 $\lim_{t\rightarrow\infty} \expect{\tilde{\mu}_i^2(t)} \geq \lim_{t\rightarrow\infty}\expect{\tilde{\mu}_i(t)}^2 = \lambda_i^2$.  Using Little's theorem on this
 congestion bound proves (\ref{eq:original-multi}). 
 
 Now suppose that 
 all $S_i(t)$ processes are independent and $Pr[S_i(t)  = \mu_{i,max}] \geq p_{min}$ for all $i$. 
 We derive the bound on $\mu_{sym}$ given in Lemma \ref{lem:multi1}. Define
 $\hat{\mu} \defequiv \min_{i\in\{1, \ldots, N\}} \mu_{i,max}$. Consider the stationary 
 and randomized algorithm $\bv{\mu}^*(t)$  that observes channel states $\bv{S}(t)$ and probabilistically places each 
 link $i \in \{1, \ldots, N\}$ in a set $\chi(t)$ with probability $0$ if $S_i(t) <  \hat{\mu}$, and 
 with probability $p_{min}/Pr[S_i(t) \geq \hat{\mu}]$ if $S_i(t)  \geq \hat{\mu}$.  Then $\chi(t)$ contains
 a random number of links, and each link appears in $\chi(t)$ independently  
 with probability $p_{min}$.  Select a link to serve
 on slot $t$ uniformly and randomly with equal probability over all links in $\chi(t)$ (remaining idle if 
 $\chi(t)$ is empty).  It follows that under this policy, a particular link $i$ is selected for transmission
 with probability  exactly 
 $(1 - (1-p_{min})^N)/N$, and is selected only if $S_i(t) \geq \hat{\mu}$.  Hence: 
 \[ \expect{\mu_i^*(t)} \geq \frac{\hat{\mu}(1-(1-p_{min})^N)}{N} \: \: \mbox{ for all $i \in \{1, \ldots, N\}$}  \]
 It follows that the symmetric rate vector with all $N$ entries equal to the right hand side in the above
 expression is in the capacity region $\Lambda$, so that $\mu_{sym}/N$ is greater than or equal to this value. 
 
% ------------------------------------------------------------------------
%GATHER{Xbib.bib}   % For Gather Purpose Only
%GATHER{Paper.bbl}  % For Gather Purpose Only
\bibliographystyle{unsrt}
\bibliography{../../../../../latex-mit/bibliography/refs}

\end{document}